\let\ds=\displaystyle
\let\to=\rightarrow
\newcommand{\be}{\begin{enumerate}}
\newcommand{\ee}{\end{enumerate}}
\newcommand{\bi}{\begin{itemize}}
\newcommand{\ei}{\end{itemize}}
\def \abs#1{{\left|#1\right|}}
\def \pa#1{{\left(#1\right)}}
\def\bs{\bigskip}
\def\ms{\medskip}
\font\ineg=msam8
\def\ie{\mathrel{\hbox{\ineg 6}}} 
\def\se{\mathrel{\hbox{\ineg >}}} 
\font\matcinq=msbm5
\font\matsept=msbm7
\font\matdix=msbm10
\def\mat{\fam\matfam}
\def\N{{\mat N}} \def\Z{{\mat Z}}
\def\R{{\mat R}} \def\C{{\mat C}}
\newcommand{\bib}[2]{\hbox{\hbox to 12mm{[#1] :\hfill} \hfill \hbox to 144mm{\vtop{\hsize=144mm#2\vfill\ms}\hfill}}}
\let\phi=\varphi
\def\sin{{\rm sin}}
\def\cos{{\rm cos}}
\newtheorem{Thm}{Theorem I-\!}[section]
\newtheorem{Lem}{Lemma I-\!}[section]
\newtheorem{Cor}{Corollary I-\!}[section]
\def\twp{\tilde{\wp}} 
\def\wpa{\varcurlyvee} 
\def\twpa{\widetilde{\varcurlyvee}} 
\begin{document}

\begin{center}
\Large{\textsc{Structure and bases of modular space sequences}}

\Large{\textsc{$(M_{2k}(\Gamma_0(N)))_{k\in \mathbb{N}^*}$ and $(S_{2k}(\Gamma_0(N)))_{k\in \mathbb{N}^*}$}}
\bs
\ms

\large\textsc{Part II: a modular butterfly hunt}
\end{center}
\ms

\begin{center}
\large Jean-Christophe Feauveau
\footnote{Jean-Christophe Feauveau,\\
Professeur en classes préparatoires au lycée Bellevue,\\
135, route de Narbonne BP. 44370, 31031 Toulouse Cedex 4, France,\\
email: Jean-Christophe.Feauveau@ac-toulouse.fr
}

\end{center}

\begin{center}
\large September 01, 2018
\end{center}

\bs

\textsc{Abstract.}

In the first part of this article, which contains three of them, we have identified the notion of level $N$ strong modular unit. It enabled us to structure the modular forms family $(M_{2k}(\Gamma_0(N)))_{k\in \; \mathbb{N}^*}$ and to propose the explicit bases for these spaces. It is in this perspective that we wrote this second part where the structure and explicit bases are proposed when $1\leq N \leq 10$.
\bs
\bs

\textsc{Key words.} modular forms, modular units, elliptic functions, Dedekind's eta function.

\bs

Classification A.M.S. 2010: 11F11, 11G16, 11F33, 33E05.
\bs
\bs

{\bf Introduction}
\ms

We propose in this article to describe the complete modular space structure $(M_{2k}(\Gamma_0(N)))_{k\in\N^*}$ for $N\in \llbracket 1,10\rrbracket$. This set of integers contains the first prime numbers, the square and the cube of prime numbers, as well as the product of two distinct prime numbers. Enough to adapt to other situations if necessary.
\ms

It is a question of applying concretely the results developed in the first part of this article \cite{FeauFM1}. For a given level $N$, we established the existence of a function $\Delta_N$ which enables one to structure the family $(M_{2k}(\Gamma_0(N)))_{k\in\N^*}$ and to describe bases of each of its spaces when we know a basis of each space $M_{2k}(\Gamma_0(N))$, for weights between $2$ and $\rho_N+$2, where $\rho_N$ is the weight of $\Delta_N$.
\ms

The modular forms constituting the bases for $2 \ie 2k \ie \rho_N+2$, will be described in the form of $\eta$-products, as far as $\Delta_N$ is concerned, but especially of classic or renormalized Weierstrass elliptic functions described in \cite{FeauE}. These will enable one representations on various forms of modular forms.
\ms

\section{-- Weierstrass functions on the reduced lattice $\Lambda_{\tau} = \Z + \tau\Z$}

The following results are reminders from \cite{FeauE}.

For $\tau$ in Poincaré half plane ${\cal H}$, let $\Lambda_\tau = \Z + \tau\Z$ be the lattice of periods $1$ and $\tau$.

We remind results for the functions $\wp(z,\tau)$ and $\twp(z,\tau)$, renormalized by a multiplicative factor that avoids cluttering the modular forms with unnecessary factors

\begin{equation*}
\wpa = \frac{1}{\pi^2}\wp \ \ \text{and} \ \ \twpa = \frac{1}{\pi^2}\twp.
\end{equation*}

These two functions enable one to obtain modular forms according to groups $\Gamma_0(N)$. As we will see, the $\wpa$ function is more flexible to use, whereas $\twpa$ is particularly adapted to $N = 2^n$ case and enables one to naturally obtain modular units.
\bs

\textsc{$\bullet$ The function ${\wpa}$}
\be
\item[-] \textit{Definition on series form}

For $(z,\tau)\in \pa{\C-\Lambda_\tau}\times {\cal H}$:
\begin{equation}
{\wpa}(z,\tau) = \frac{1}{\pi^2}\pa{1 + \sum_{n,m}^* \frac{1}{(z+n+m\tau)^2} - \frac{1}{(n+m\tau)^2}}, \ \ \forall z\in \C-\Lambda_\tau. \label{Wp}
\end{equation}
where it was noted $\ds \sum_{n,m}^* = \sum_{\genfrac{}{}{0pt}{1}{(m,n)\in \Z^2}{(m,n)\not = (0,0)}}$.

\item[-] \textit{Poles and zeros}

The zeros of $z\mapsto {\wpa}(z,\tau)$ are not simply locatable, this is essentially due to the dissymmetric processing of the $z=0$ pole.

The second order poles of ${\wpa}$ are exactly on the lattice $\Lambda_{\tau}$.

\item[-] \textit{Fourier's representations}

For $z = \alpha + \beta\tau$, $(\alpha,\beta)\in\R^2$,

\begin{align}
\ds {\wpa}(z,\tau) &= \ds c_\tau - 4i \sum_{n=1}^{+\infty} \frac{n}{\sin(n\pi\tau)}\cos(n\pi (2z-\tau))\hfill & \ \text{when} \ \beta\in ]0,1[ \label{WpF1} \\
  &= \ds d_\tau + \frac{4i}{\tau^2}\sum_{n=1}^{+\infty} \frac{n}{\sin(n\pi/\tau)}\cos(n(2z-1)\pi/\tau)\hfill & \ \text{when} \ \alpha\in ]0,1[.\label{WpF2}
\end{align}

The terms $c_\tau$ and $d_\tau$ depend only on $\tau$ and check $c_\tau - d_\tau = -\frac{2}{\pi^2 \tau}$. 
Both representations enable one to reconstitute ${\wpa}(z,\tau)$ for $(z,\tau)\in \pa{\C-\Lambda_\tau}\times {\cal H}$ by periodicity.
\ms

Moreover, Euler's equality
\begin{equation}
\forall z\in \C - \Z, \ \ \frac{1}{\sin(\pi z)^2} = \frac{1}{\pi^2} \sum_{n\in\Z} \frac{1}{(z-n)^2} \label{Eul}
\end{equation}
coupled to $(\ref{Wp})$ gives
\begin{equation}
\forall z\in \C - \Lambda_\tau, \ \ {\wpa}(z,\tau) = -\frac{1}{3} + \frac{1}{\sin(\pi z)^2} + \sum_{n=1}^{+\infty} \frac{1}{\sin(\pi(n\tau + z))^2} + \frac{1}{\sin(\pi(n\tau - z))^2} - \frac{2}{\sin(\pi n\tau)^2}.  \label{WpF3}
\end{equation}

\item[-] \textit{Factorization}

Ignorance of the location of the zeros of ${\wpa}$ does not provide a pleasant factorized representation of this function.
\ee
\bs

\textsc{$\bullet$ The $\twpa$ function}
\be
\item[-] \textit{Definition on series form}

For $(z,\tau)\in \pa{\C-(\frac{1+\tau}{2}+\Lambda_\tau)}\times {\cal H}$:
\begin{equation}
{\twpa}(z,\tau) = \frac{1}{\pi^2} \sum_{(n,m)\in\Z^2} \frac{1}{(z+(n+\frac{1}{2})+(m+\frac{1}{2})\tau)^2} - \frac{1}{((n+\frac{1}{2})+(m+\frac{1}{2})\tau)^2}. \label{Wpt}
\end{equation}

\item[-] \textit{Poles and zeros}

The second order zeros of ${\twpa}$ are located on the lattice $\Lambda_{\tau}$.

The poles of ${\twpa}$, also of order two, are on the translated lattice $ \frac{1+\tau}{2} + \Lambda_{\tau}$.

\item[-] \textit{Fourier's representations}

For $z = \alpha + \beta\tau$, $(\alpha,\beta)\in\R^2$,

\begin{align}
\ds {\twpa}(z,\tau) &= \ds 4i \pa{\sum_{n=1}^{+\infty} \frac{(-1)^n n}{\sin(n\pi\tau)} - \sum_{n=1}^{+\infty} \frac{(-1)^n n}{\sin(n\pi\tau)}\cos(2n\pi z)}\hfill & \ \text{when} \ \abs{\beta} < \frac{1}{2} \label{WptF1}\\
  &= \ds \frac{4i}{\tau^2}\pa{-\sum_{n=1}^{+\infty} \frac{(-1)^n n}{\sin(n\pi/\tau)} + \sum_{n=1}^{+\infty} \frac{(-1)^n n}{\sin(n\pi/\tau)}\cos(2n\pi z/\tau)}\hfill & \ \text{when} \ \abs{\alpha} < \frac{1}{2}. \label{WptF2}
\end{align}

Both representations enable one to reconstitute ${\twpa}(z,\tau)$ for $(z,\tau)\in \pa{\C-(\frac{1+\tau}{2}+\Lambda_\tau)}\times {\cal H}$ by periodicity.

In addition, Euler's equality $(\ref{Eul})$ coupled with $(\ref{Wpt})$ gives
\begin{equation}
\forall z\in \pa{\C-(\frac{1+\tau}{2}+\Lambda_\tau)}\times {\cal H}, \ \ {\twpa}(z,\tau) = \sum_{n\in \Z} \frac{1}{\sin(\pi((n+\frac{1}{2})\tau + z + \frac{1}{2}))^2} - \frac{1}{\sin(\pi((n+\frac{1}{2})\tau + \frac{1}{2}))^2}.  \label{WptF3}
\end{equation}

\item[-] \textit{Factorization}

For $(z,\tau)\in \pa{\C-(\frac{1+\tau}{2}+\Lambda_\tau)}\times {\cal H}$ and $q = e^{2i\pi\tau}$:
\begin{equation}
{\twpa}(z,\tau) = -16\sin \left( \pi \,z \right) ^{2} q^{1/2}
\prod_{n=0}^{+\infty} (1-q^{2n+2})^4 (1-q^{n+1/2})^4
\pa{\ds \prod _{n=0}^{+\infty} \frac{\ds \left( 1-{\text{e}^{2\,
i\pi \,z}}{q}^{n+1} \right)  \left( 1-{\text{e}^{-2\,i\pi \,z}}{q}^{n+1}
 \right) }{\left( 1+{\text{e}^{2\,i\pi \,z}}{q}^{n+1/2} \right)  \left( 1+{\text{e}^{-2\,i\pi \,z}}{q}^{n+1/2} \right)}}^2. \label{Facttwpa}
\end{equation}

\ee
\bs

\textsc{$\bullet$ Relationship between functions ${\twpa}$ and ${\wpa}$}
\ms

We will see later, the two functions ${\twpa}$ and ${\wpa}$ lead to modular functions, this aspect is visible on the relations $(\ref{WptF1})$ and $(\ref{WptF2})$, for example.
\ms

For $tau\in {\cal H}$ fixed, knowledge on the poles of ${\wpa}$ and ${\twpa}$ show that $z\mapsto {\twpa}(z,\tau) - {\wpa}(z+\frac{1+\tau}{2},\tau)$ is constant. Liouville's Theorem combined with ${\twpa}(0,\tau) = 0$ gives the following result.

\begin{Lem}
\begin{equation}
{\twpa}(z,\tau) = {\wpa}(z+ \frac{1+\tau}{2},\tau) - {\wpa}(\frac{1+\tau}{2},\tau). \label{Wp2Wpt}
\end{equation}
\end{Lem}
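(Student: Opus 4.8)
The plan is to verify that the two sides of $(\ref{Wp2Wpt})$, regarded as functions of $z$ for fixed $\tau$, are entire functions that agree, and then to pin down the additive constant. First I would set $f(z) = {\twpa}(z,\tau)$ and $g(z) = {\wpa}(z+\frac{1+\tau}{2},\tau) - {\wpa}(\frac{1+\tau}{2},\tau)$, and argue that $h = f - g$ extends to an entire function on $\C$. Indeed, the only possible singularities of $f$ and of $z\mapsto {\wpa}(z+\frac{1+\tau}{2},\tau)$ are double poles, for $f$ on $\frac{1+\tau}{2}+\Lambda_\tau$ (from the stated pole locus of ${\twpa}$) and for the shifted ${\wpa}$ on the same set $\frac{1+\tau}{2}+\Lambda_\tau$ (since the poles of ${\wpa}$ are exactly $\Lambda_\tau$). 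So $h$ has at worst double poles on $\frac{1+\tau}{2}+\Lambda_\tau$ and is holomorphic elsewhere.

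Next I would show these potential poles are actually removable, i.e. that the principal parts cancel. Near a point $w\in\frac{1+\tau}{2}+\Lambda_\tau$ both functions behave like $\frac{1}{\pi^2(z-w)^2}+O(1)$: for the shifted ${\wpa}$ this is immediate from the series $(\ref{Wp})$ (the leading term of ${\wpa}$ at a lattice point is $\frac{1}{\pi^2 z^2}$), and for ${\twpa}$ it follows the same way from $(\ref{Wpt})$, the singular term coming from the index $(n,m)$ with $(n+\tfrac12,m+\tfrac12)\tau$ equal to $-w$. Hence the $(z-w)^{-2}$ terms match and, because both functions are even-type around their poles (no $(z-w)^{-1}$ term — the residues vanish, as for any second-order pole coming from a summable "$1/(\cdot)^2$" series), the full principal parts cancel and $h$ extends holomorphically across $\frac{1+\tau}{2}+\Lambda_\tau$. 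Alternatively, and perhaps more cleanly, one can invoke $(\ref{WpF3})$ and $(\ref{WptF3})$ directly: shifting $z\mapsto z+\frac{1+\tau}{2}$ in $(\ref{WpF3})$ turns each $\sin(\pi(\cdots))^2$ denominator into exactly the $\sin(\pi((n+\frac12)\tau+z+\frac12))^2$ shape appearing in $(\ref{WptF3})$, so the $z$-dependent parts coincide term by term after re-indexing, which shows $h$ is constant in one stroke.

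Then I would use that $h$ is a bounded entire function — it is $\Lambda_\tau$-periodic (both ${\twpa}(\cdot,\tau)$ and ${\wpa}(\cdot+\frac{1+\tau}{2},\tau)$ are $\Lambda_\tau$-periodic, the latter because ${\wpa}(\cdot,\tau)$ is, by $(\ref{WpF1})$–$(\ref{WpF2})$ being Fourier series in $z$ with period lattice $\Lambda_\tau$) and continuous on the compact fundamental domain — so by Liouville's theorem $h$ is a constant. Finally I would evaluate that constant at $z=0$: the right-hand side gives ${\wpa}(\frac{1+\tau}{2},\tau)-{\wpa}(\frac{1+\tau}{2},\tau)=0$, while ${\twpa}(0,\tau)=0$ because the series $(\ref{Wpt})$ visibly vanishes at $z=0$ (each summand is then of the form $\frac{1}{a^2}-\frac{1}{a^2}=0$), equivalently because $0\in\Lambda_\tau$ is a zero of ${\twpa}$ by the stated zero locus. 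Hence $h\equiv 0$, which is $(\ref{Wp2Wpt})$.

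The main obstacle is the bookkeeping at the poles: one must be sure that the non-symmetric treatment of the $z=0$ term in the definition $(\ref{Wp})$ of ${\wpa}$ does not introduce a spurious linear term in the Laurent expansion at $w\in\frac{1+\tau}{2}+\Lambda_\tau$ that would fail to cancel. This is why routing the argument through the sine-series identities $(\ref{WpF3})$ and $(\ref{WptF3})$ is attractive — there the cancellation of $z$-dependent terms is a clean re-indexing — but if one prefers the Liouville route, the key check is simply that $z\mapsto{\twpa}(z,\tau)-{\wpa}(z+\frac{1+\tau}{2},\tau)$ has no pole and is $\Lambda_\tau$-periodic, after which constancy and the value $0$ at $z=0$ finish it.
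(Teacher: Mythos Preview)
Your proposal is correct and follows essentially the paper's own argument: both observe that $z\mapsto {\twpa}(z,\tau) - {\wpa}(z+\frac{1+\tau}{2},\tau)$ is a doubly periodic function whose poles cancel, hence constant by Liouville, and then fix the constant via ${\twpa}(0,\tau)=0$. Your alternative route through the sine-series representations $(\ref{WpF3})$ and $(\ref{WptF3})$ is a pleasant extra not in the paper, but the primary line is the same.
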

\ms

To complete this lemma, we try to write ${\wpa}$ according to ${\twpa}$. Specifically, we are going to establish the following equality (which is not found in \cite{FeauE})
\begin{Lem}
For $\tau\in {\cal H}$, we have
\begin{equation}
{\wpa}(\frac{1}{2}+\tau,2\tau) = -\frac{1}{3}\pa{{\twpa}(\frac{1}{2},2\tau) + {\twpa}(\tau,2\tau)} \in M_2(\Gamma_0(4)) \label{Mod1}
\end{equation}

and consequently
\begin{equation}
{\wpa}(z,\tau) = {\twpa}\pa{z+\frac{1+\tau}{2},\tau} -\frac{1}{3}\pa{{\twpa}(\frac{1}{2},\tau) + {\twpa}(\frac{\tau}{2},\tau)}. \label{Wpt2Wp}
\end{equation}
\end{Lem}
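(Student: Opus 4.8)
The strategy is to reduce the second lemma to the first one, which gives an affine expression of $\twpa$ in terms of $\wpa$. First I would prove the two displayed identities in reverse order of how they are stated: establish $(\ref{Mod1})$ as a special evaluation, and then combine it with the already-proven Lemma $(\ref{Wp2Wpt})$ to get $(\ref{Wpt2Wp})$ by a simple change of lattice and translation of variable. More precisely, from $(\ref{Wp2Wpt})$ one has $\wpa(z+\frac{1+\tau}{2},\tau) = \twpa(z,\tau) + \wpa(\frac{1+\tau}{2},\tau)$; substituting $z\mapsto z - \frac{1+\tau}{2}$ gives $\wpa(z,\tau) = \twpa(z-\frac{1+\tau}{2},\tau) + \wpa(\frac{1+\tau}{2},\tau)$. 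Since $\twpa(\cdot,\tau)$ is $\Lambda_\tau$-periodic (its poles and zeros lie on translated lattices), $\twpa(z-\frac{1+\tau}{2},\tau) = \twpa(z+\frac{1+\tau}{2},\tau)$, so $(\ref{Wpt2Wp})$ will follow once we identify the constant $\wpa(\frac{1+\tau}{2},\tau)$ with $-\frac{1}{3}\big(\twpa(\frac{1}{2},\tau)+\twpa(\frac{\tau}{2},\tau)\big)$.

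The heart of the matter is therefore to compute $\wpa(\frac{1+\tau}{2},\tau)$, equivalently to prove $(\ref{Mod1})$ after the rescaling $\tau \leadsto 2\tau$ that turns the half-period $\frac{1+2\tau}{2} = \frac12 + \tau$ into the argument appearing there. The natural tool is the classical relation between $\wp$ evaluated at a half-period and the three half-period values, or more directly a Fourier/$q$-expansion computation: using $(\ref{WpF1})$ or $(\ref{WpF3})$ to expand $\wpa(\frac12+\tau, 2\tau)$ and $(\ref{WptF1})$ or $(\ref{WptF3})$ to expand $\twpa(\frac12, 2\tau)$ and $\twpa(\tau,2\tau)$, and then checking that the $q$-expansions match term by term after multiplying by $-3$. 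The point $z=\frac12+\tau$ with lattice $\Z+2\tau\Z$ has $\beta = \frac12 \in\,]0,1[$, so $(\ref{WpF1})$ applies, and $\cos(n\pi(2z-2\tau)) = \cos(n\pi) = (-1)^n$ there, collapsing the series to $c_{2\tau} - 4i\sum_n \frac{(-1)^n n}{\sin(2n\pi\tau)}$; similarly $\twpa(\frac12,2\tau)$ and $\twpa(\tau,2\tau)$ from $(\ref{WptF1})$ become explicit constant series in $q = e^{4i\pi\tau}$. Matching these reduces to an identity among Lambert-type series, which in turn amounts to a known expression for $c_\tau$ (equivalently, for $G_2$ or the constant term of $\wp$) in terms of theta-null values.

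Once the identity is verified, the membership ${\wpa}(\frac12+\tau,2\tau)\in M_2(\Gamma_0(4))$ can be argued separately: the right-hand side of $(\ref{Mod1})$ is, by the general theory invoked in \cite{FeauE} and Part I, a combination of values of $\twpa$ at torsion points, each of which is a weight-$2$ modular form for $\Gamma_0(4)$ — this follows from the transformation behaviour already implicit in the Fourier representations $(\ref{WptF1})$–$(\ref{WptF2})$ and the $q$-product $(\ref{Facttwpa})$, which exhibits the level and the weight. The main obstacle I anticipate is not the formal reduction but pinning down the constant $\wpa(\frac{1+\tau}{2},\tau)$ cleanly: one must be careful about the renormalization factor $1/\pi^2$, about which half-period is which under $\tau\mapsto 2\tau$, and about the non-symmetric treatment of the $z=0$ pole in the definition $(\ref{Wp})$ of $\wpa$ (the ``$+1$'' inside the parenthesis and the subtracted $1/(n+m\tau)^2$ terms), all of which affect the additive constant that is precisely what $(\ref{Mod1})$ is asserting. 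I would handle this by working with $(\ref{WpF3})$ and $(\ref{WptF3})$, which are the cleanest ``$1/\sin^2$'' forms, evaluating both sides at the relevant points, and reducing everything to the single scalar identity between the resulting Lambert series.
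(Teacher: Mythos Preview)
Your plan is correct and follows essentially the same route as the paper: deduce $(\ref{Wpt2Wp})$ from $(\ref{Wp2Wpt})$ together with the identification of the constant $\wpa(\frac{1+\tau}{2},\tau)$, and establish that constant (equivalently $(\ref{Mod1})$) by a direct computation with the $1/\sin^2$ representations $(\ref{WpF3})$ and $(\ref{WptF3})$, while deferring the $M_2(\Gamma_0(4))$ membership to a separate modularity check. The paper carries out exactly this $1/\sin^2$ computation, reducing both sides to finite combinations of the series $\sum 1/\sin^2((2n{+}1)\pi\tau)$, $\sum 1/\cos^2((2n{+}1)\pi\tau)$, $\sum 1/\sin^2(2n\pi\tau)$ and matching them algebraically; your detour through $(\ref{WpF1})$ and the constant $c_{2\tau}$ is unnecessary, as you yourself anticipated.
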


Note that this last relationship leads to a partial factorization of ${\wpa}(z,\tau)$. It also enables one to specify the constants $c_\tau$ and $d_\tau$ in the $(\ref{WpF1})$ and $(\ref{WpF2})$ equalities.
\ms

\begin{proof}
The relation $(\ref{Wpt2Wp})$ is clearly deduced from the equalities $(\ref{Wp2Wpt})$ and $(\ref{Mod1})$, then the modular character of $(\ref{Mod1})$ will be established in the following paragraph.
\ms

It remains to prove the equality $(\ref{Mod1})$, we rely for that on the representations $(\ref{WpF3})$ and $(\ref{WptF3})$. We find
\[
\begin{array}{lcl}
\ds {\wpa}(\frac{1}{2}+\tau,2\tau) & = &\ds -\frac{1}{3} + \frac{1}{\cos(\pi\tau)^2} + \sum_{n=1}^{+\infty} \frac{1}{\cos((2n+1)\pi\tau)^2} + \frac{1}{\cos((2n-1)\pi\tau)^2} - \frac{2}{\sin(2n\pi\tau)^2}\\
 & = & \ds -\frac{1}{3} + 2 \sum_{n=0}^{+\infty} \frac{1}{\cos((2n+1)\pi\tau)^2} - 2 \sum_{n=1}^{+\infty} \frac{1}{\sin(2n\pi\tau)^2}.
\end{array}
\]
On the other hand,
\[
\begin{array}{lcl}
\ds {\twpa}(\frac{1}{2},2\tau) + {\twpa}(\tau,2\tau) & = &\ds \sum_{n\in\Z} \frac{1}{\sin((2n+1)\pi\tau)^2} - \sum_{n\in\Z} \frac{1}{\cos((2n+1)\pi\tau)^2} \\
& & \ds  + \sum_{n\in\Z} \frac{1}{\cos((2n+2)\pi\tau)^2} - \sum_{n\in\Z} \frac{1}{\cos((2n+1)\pi\tau)^2}\\
& = & \ds 1 + 2\sum_{n=0}^{+\infty} \frac{1}{\sin((2n+1)\pi\tau)^2} + \sum_{n=1}^{+\infty} \frac{1}{\cos((2n)\pi\tau)^2}\\
& & \ds \ - 4 \sum_{n=0}^{+\infty} \frac{1}{\cos((2n+1)\pi\tau)^2}\\
& = & \ds 1 - 2\sum_{n=1}^{+\infty} \frac{1}{\sin(2n\pi\tau)^2} + 2\sum_{n=1}^{+\infty} \frac{1}{\sin(n\pi\tau)^2}\\
& & \ds \ + 2\sum_{n=1}^{+\infty} \frac{1}{\cos((2n)\pi\tau)^2} - 6 \sum_{n=0}^{+\infty} \frac{1}{\cos((2n+1)\pi\tau)^2}\\
& = & \ds 1 + 2\sum_{n=1}^{+\infty} \frac{1}{(\sin(n\pi\tau)\cos(n\pi\tau))^2}  - 2\sum_{n=1}^{+\infty} \frac{1}{\sin(2n\pi\tau)^2}\\
& & \ds \ - 6 \sum_{n=0}^{+\infty} \frac{1}{\cos((2n+1)\pi\tau)^2}\\
& = & \ds 1 +6\sum_{n=1}^{+\infty} \frac{1}{\sin(2n\pi\tau)^2} - 6 \sum_{n=0}^{+\infty} \frac{1}{\cos((2n+1)\pi\tau)^2}\\
& = & \ds -3 \, {\wpa}(\frac{1}{2}+\tau,2\tau)
\end{array}
\]
and the result.
\end{proof}
\bs
\ms

\section{-- The symmetries of ${\twpa}$ and ${\wpa}$; Examples of modular forms}

The equalities $(\ref{Wp})$, $(\ref{WptF1})$ and $(\ref{WptF2})$ for ${\twpa}$ and $(\ref{Wp})$ for ${\wpa}$ lead to the following elementary symmetries:

\begin{equation}
\begin{array}{c|c}
{\twpa}(z+1,\tau) = {\twpa}(z+\tau,\tau) = {\twpa}(z,\tau) \ & \ \  {\wpa}(z+1,\tau) = {\wpa}(z+\tau,\tau) = {\wpa}(z,\tau)\\
{\twpa}(z,\tau+2) = {\twpa}(z,\tau) & {\wpa}(z,\tau+1) = {\wpa}(z,\tau)\\
{\twpa}(-z,\tau) = {\twpa}(z,\tau) & {\wpa}(-z,\tau) = {\wpa}(z,\tau) \\
{\twpa}(z,-\tau) = {\twpa}(z,\tau) & {\wpa}(z,-\tau) = {\wpa}(z,\tau) 
\end{array}
\end{equation}

Let us note two points. For $z\in \C$ fixed, the period of ${\wpa}(z,.)$ is worth $1$, whereas it is worth $2$ for ${\twpa}(z,.)$. Besides, the last symmetry is correct, but from the point of view of modular forms, we only consider $\tau \in {\cal H}$ case. To this is added the inversion symmetry which leads to the modular character:
\ms

\begin{equation}
\begin{array}{c|c}
\ds {\twpa}(z,\tau) = \frac{1}{\tau^2}{\twpa}\pa{\frac{z}{\tau},-\frac{1}{\tau}} \ \ & \ \ \ds {\wpa}(z,\tau) = \frac{1}{\tau^2}{\wpa}\pa{\frac{z}{\tau},-\frac{1}{\tau}}
\end{array}
\end{equation}

Now let us show the modularity of some modular forms built with $\wpa$ and $\twpa$.
\ms

For a given $N$, we know systems that generate $(\Gamma_0(N),\times)$. A standard result indicates that it is sufficient to verify the $(i)$ condition of definition I-1.3 on such a system to deduct the $(i)$ property in its generality. Similarly, we know how to build a representative system for each of the classes of $\Gamma_0(1)/\Gamma_0(N)$. To show $(ii)$, just check this condition on the equivalency class representatives.
\bs

We will give, in this paragraph, examples of modular forms to build explicit bases. In order not to overextend this article, it will not be possible to produce a demonstration of modularity for each of these functions. Nevertheless, we will now show on some examples, how the symmetries of $\wpa$ and $\twpa$ lead to demonstrations of modularity. An effective way to verify future and unproven modularity claims is to compare with the $q = e^{2i\pi\tau}$ asymptotic expansion of modular feature bases found, particularly in SAGE software.
\bs
\ms

\textsc{$\bullet$ $\ds {\wpa}\pa{\tau,2\tau} \in M_{2}(\Gamma_0(2))$}
\ms

This result is well known, see \cite{Diam} p. 130 for example
\ms

From the relationships $(\ref{Wpt})$, $(\ref{WptF1})$ and $(\ref{WptF2})$, the conditions of definition I-1.3 are easy to verify except the condition $(i)$ for the non-trivial generator of $\Gamma_0(2)$, $\begin{pmatrix} 1 & 0\\ 2 & 1\end{pmatrix}$.

The cusps behavior of a function $\Phi$ is processed using a system of class representatives of $\Gamma_0(1)/\Gamma_0(2)$ given by SAGE:
\[\begin{pmatrix}1& 0\\ 0& 1\end{pmatrix}, \
\begin{pmatrix}0& -1\\ 1& 0\end{pmatrix} \ \text{et} \ 
\begin{pmatrix}1& 0\\ 1& 1\end{pmatrix}.\]

For such a matrix it is necessary to show that $\tau \mapsto (c\tau+d)^{-2}\Phi\pa{\frac{a\tau+b}{c\tau+d}}$ admits a limit when $\tau$ tends towards $i\infty$ and this is clear on the representation of Weierstrass given in $(\ref{Wp})$.
\bs

Let us pose $\Phi(\tau) = \wpa(\tau,2\tau)$ and check the modularity relationship for $\gamma$ by applying the inversion twice:

\begin{equation}
\begin{array}{r c l}
\ds \Phi\pa{\frac{\tau}{2\tau+1}} & = & \ds {\wpa}\pa{\frac{\tau}{2\tau+1},\frac{2\tau}{2\tau+1}} \\
 & = & \ds \pa{\frac{2\tau+1}{2\tau}}^{2}{\wpa}\pa{\frac{1}{2},-\frac{1}{2\tau}} \\
 & = & \ds \pa{\frac{2\tau+1}{2\tau}}^{2}(-2\tau)^2{\wpa}\pa{-\tau,2\tau} \\
 & = & \ds (2\tau+1)^{2}\Phi(\tau).
\end{array} \label{M22}
\end{equation}
\ms

\textsc{$\bullet$ $\ds {\twpa}\pa{\frac{1}{2},2\tau} \in M_{2}(\Gamma_0(4))$}
\ms

Relationships $(\ref{Wpt})$ and $(\ref{WptF1})$ give immediately for $\in {\cal H}$:
\begin{equation}
\begin{array}{lcl}
\Phi(\tau) = {\twpa}(\frac{1}{2},2\tau) & = & \ds \frac{1}{\pi^2} \sum_{(n,m)\in\Z^2} \frac{1}{(n+(2m+1)\tau)^2} - \frac{1}{((n+\frac{1}{2})+(2m+1)\tau)^2} \label{M11}\\
& = & \ds -8i \sum_{n=1}^{+\infty} \frac{2n+1}{\sin(2(2n+1)\pi\tau)}.
\end{array}
\end{equation}

Similarly, the conditions in definition I-1.3 are easy to establish except the $(i)$ condition for the non-trivial generator $\gamma = \begin{pmatrix} 1 & 0\\ 4 & 1\end{pmatrix}$, and possibly the cusps behavior.
\ms

Let us check the modularity relationship for $\gamma$ by applying the inversion twice:

\begin{equation}
\begin{array}{r c l}
\ds \Phi\pa{\frac{\tau}{4\tau+1}} & = & \ds {\twpa}\pa{\frac{1}{2},\frac{2\tau}{4\tau+1}} \\
 & = & \ds \pa{\frac{4\tau+1}{2\tau}}^{2}{\twpa}\pa{\frac{1}{2} \frac{4\tau+1}{2\tau},-\frac{4\tau+1}{2\tau}} \\
 & = & \ds \pa{\frac{4\tau+1}{2\tau}}^{2}{\twpa}\pa{\frac{1}{4\tau} + 1,-\frac{1}{2\tau} - 2} \\
 & = & \ds \pa{\frac{4\tau+1}{2\tau}}^{2}{\twpa}\pa{\frac{1}{4\tau},-\frac{1}{2\tau}} \\
 & = & \ds \pa{\frac{4\tau+1}{2\tau}}^{2} \pa{2\tau}^{2}{\twpa}\pa{-\frac{1}{2},2\tau} \\
 & = & \ds (4\tau+1)^{2}\Phi(\tau).
\end{array} \label{M24}
\end{equation}
\ms

For the study at cusps of a function $\Phi$, we use the system of class representatives of $\Gamma_0(1)/\Gamma_0(4)$ given by SAGE:
\[\begin{pmatrix}1& 0\\ 0& 1\end{pmatrix}, \
\begin{pmatrix}0& -1\\ 1& 0\end{pmatrix}, \ 
\begin{pmatrix}1& 0\\ 1& 1\end{pmatrix}, \
\begin{pmatrix}0& -1\\ 1& 2\end{pmatrix}, \
\begin{pmatrix}0& -1\\ 1& 3\end{pmatrix}, \
\begin{pmatrix}1& 0\\ 2& 1\end{pmatrix}.\]

For these matrices, the existence of a limit for $\tau \mapsto (c\tau+d)^{-2}\Phi\pa{\frac{a\tau+b}{c\tau+d}}$ when $\tau$ tends towards $i\infty$ is clear on the Weierstrass representation given in $(\ref{M11})$.
\bs

\textsc{$\bullet$ $\ds {\twpa}\pa{\tau,2\tau} \in M_{2}(\Gamma_0(4))$}
\ms

Using $(\ref{Wpt})$ but also $(\ref{WptF2})$ we get:

\begin{equation}
\begin{array}{lcl}
\ds \Phi(\tau) = {\twpa}(\tau,2\tau) & = & \ds \frac{1}{\pi^2} \sum_{(n,m)\in\Z^2} \frac{1}{((n+\frac{1}{2})+2m\tau)^2} - \frac{1}{((n+\frac{1}{2})+(2m+1)\tau)^2} \\
 & = & \ds \frac{2i}{\tau^2} \sum_{n=1}^{+\infty} \frac{n} {\sin(n\pi/\tau)}. \label{M244}
 \end{array}
\end{equation}
\ms

Again, the conditions of definition I-1.3 are easy to establish except the $(i)$ condition for the non-trivial generator of $\Gamma_0(4)$, $A = \begin{pmatrix} 1 & 0\\ 4 & 1\end{pmatrix}$. Cusps behavior is treated as above thanks to the Weierstrass form of $(\ref{M244})$ relationships.
\ms

\begin{equation}
\begin{array}{r c l}
\ds \Phi\pa{\frac{\tau}{4\tau+1}} & = & \ds {\twpa}\pa{\frac{\tau}{4\tau+1},\frac{2\tau}{4\tau+1}} \\
 & = & \ds \pa{\frac{4\tau+1}{2\tau}}^{2}{\twpa}
 \pa{\frac{1}{2},-\frac{4\tau+1}{2\tau}} \\
 & = & \ds \pa{\frac{4\tau+1}{2\tau}}^{2} {\twpa}\pa{\frac{1}{2},-\frac{1}{2\tau}} \\
 & = & \ds \pa{\frac{4\tau+1}{2\tau}}^{2} \pa{-2\tau}^{2}{\twpa}\pa{-\tau,2\tau} \\
 & = & \ds (4\tau+1)^{2}\Phi(\tau).
\end{array}
\end{equation}
\bs

\textsc{$\bullet$ $\ds {\twpa}\pa{\tau,2\tau}^2 \in M_{4}(\Gamma_0(2))$}
\ms

Let us pose $\Phi(\tau) = \ds\pa{\tau,2\tau}$. Based on the previous case, $\Psi = \Phi^2\in M_{4}(\Gamma_0(4))$ and
we want to show better: $\Psi \in M_{4}(\Gamma_0(2))$.

Again, the conditions of definition I-1.3 are easy to establish except the $(i)$ condition for the non-trivial generator of $\Gamma_0(2)$, $A = \begin{pmatrix} 1 & 0\ 2 & 1\end{pmatrix}$. Cusps behavior is treated as above thanks to the $\Psi$ Weierstrass representation.
\ms

In order to check the modularity $\Psi$ under the action of $A$, let us notice the equality $\twpa(\frac{1}{2},\tau-1) = -\twpa(\frac{1}{2},\tau))$ which is an immediate consequence of $(\ref{WptF1})$.
\ms

\begin{equation}
\begin{array}{r c l}
\ds \Psi\pa{\frac{\tau}{2\tau+1}} & = & \ds {\twpa}\pa{\frac{\tau}{2\tau+1},\frac{2\tau}{2\tau+1}}^2 \\
 & = & \ds \pa{\frac{2\tau+1}{2\tau}}^{4}{\twpa}
 \pa{\frac{1}{2},-\frac{2\tau+1}{2\tau}}^2 \\
 & = & \ds \pa{\frac{2\tau+1}{2\tau}}^{4} \pa{-{\twpa}\pa{\frac{1}{2},-\frac{1}{2\tau}}}^2 \\
 & = & \ds \pa{\frac{2\tau+1}{2\tau}}^{4} \pa{-2\tau}^{2}{\twpa}\pa{-\tau,2\tau} \\
 & = & \ds (2\tau+1)^{4}\Psi(\tau).
\end{array}
\end{equation}

The result can be deduced from this.\bs

\textsc{$\bullet$ $\ds {\wpa}(\frac{1}{2}+\tau,2\tau) = -\frac{1}{3}\pa{{\twpa}(\frac{1}{2},2\tau) + {\twpa}(\tau,2\tau)} \in M_2(\Gamma_0(4))$}
\ms

This is the $(\ref{Mod1})$ relationship already established and which is also a modular equality, here is why.
\ms

First, we check that $M_2(\Gamma_0(4))$ belongs to the space by checking the only non-trivial condition: $\Phi\pa{\frac{\tau}{4\tau+1}} = (4\tau+1)^{2}\Phi(\tau)$.
\ms

\begin{equation}
\begin{array}{r c l}
\ds \Phi\pa{\frac{\tau}{4\tau+1}} & = & \ds {\wpa}\pa{\frac{6\tau+1}{8\tau+2},\frac{2\tau}{4\tau+1}} \\
 & = & \ds \pa{\frac{4\tau+1}{2\tau}}^{2}{\wpa}
 \pa{\frac{6\tau+1}{4\tau},-\frac{4\tau+1}{2\tau}} \\
 & = & \ds \pa{\frac{4\tau+1}{2\tau}}^{2} {\wpa}\pa{\frac{2\tau+1}{4\tau},-\frac{1}{2\tau}} \\
 & = & \ds \pa{\frac{4\tau+1}{2\tau}}^{2} \pa{-2\tau}^{2}{\wpa}\pa{\frac{2\tau+1}{2},2\tau} \\
 & = & \ds (4\tau+1)^{2}\Phi(\tau).
\end{array}
\end{equation}
\ms

Besides, $\ds {\twpa}(\tau,2\tau)$ and ${\twpa}(\frac{1}{2},2\tau))$ are two elements of the two dimensional space $M_2(\Gamma_0(4))$.

The $(\ref{M11})$ and $(\ref{M22})$ relationships enable one to write:
\[{\twpa}(\tau,2\tau) = 1 - 8q + 24q^2 + O(q^3) \ \ \text{et} \ \ {\twpa}(\frac{1}{2},2\tau) = -16 q + O(q^3).\]
\ms

So $ ({\twpa}(\tau,2\tau), -\frac{1}{16}{\twpa}(\frac{1}{2},2\tau))$ is a $M_4(\Gamma_0(2))$ unitary upper triangular basis.

As a result $\ds {\wpa}(\frac{1}{2}+\tau,2\tau) = -\frac{1}{3}(1-24q+24q^2+O(q^3))$ is an element of $M_4(\Gamma_0(2))$ and the relationship $(\ref{Mod1})$ is the expression of $\ds {\wpa}(\frac{1}{2}+\tau,2\tau)$ in the previous basis.
\bs

\textsc{$\bullet$ $\ds {\twpa}\pa{\frac{1}{2},\tau}^2 \in M_{4}(\Gamma_0(2))$}
\ms

This function is particularly interesting since it will be, with a multiplicative constant, $\Delta_2$.
\ms

The relationships $(\ref{Wpt})$ and $(\ref{WptF1})$ give:
\begin{equation}
\begin{array}{lcl}
\Phi(\tau) = {\twpa}(\frac{1}{2},\tau)^2 & = & \ds \frac{1}{\pi^2} \pa{\sum_{(n,m)\in\Z^2} \frac{1}{(n+(m+\frac{1}{2})\tau)^2} - \frac{1}{((n+\frac{1}{2})+(m+\frac{1}{2})\tau)^2}}^2\\
& = & \ds -64 \pa{\sum_{n=1}^{+\infty} \frac{2n+1}{\sin((2n+1)\pi\tau)}}^2. \label{M32}
\end{array}
\end{equation}

The $\Phi$ function is clearly $1$-periodic, it remains to check the modular relationship $\ds \Phi\pa{\frac{\tau}{4\tau+1}} = (4\tau+1)^{2}\Phi(\tau)$:
\ms

\begin{equation}
\begin{array}{r c l}
\ds \Phi\pa{\frac{\tau}{4\tau+1}} & = & \ds {\twpa}\pa{\frac{1}{2},\frac{\tau}{4\tau+1}}^2 \\
 & = & \ds \pa{\frac{4\tau+1}{\tau}}^{4}{\twpa}\pa{\frac{1}{2} \frac{4\tau+1}{\tau},-\frac{4\tau+1}{\tau}}^2 \\
 & = & \ds \pa{\frac{4\tau+1}{\tau}}^{4}{\twpa}\pa{\frac{1}{2\tau} + 2,-\frac{1}{\tau} - 4}^2 \\
 & = & \ds \pa{\frac{4\tau+1}{\tau}}^{4}{\twpa}\pa{\frac{1}{2\tau},-\frac{1}{\tau}}^2 \\
 & = & \ds \pa{\frac{4\tau+1}{\tau}}^{4} \tau^{2}{\twpa}\pa{-\frac{1}{2},\tau}^2 \\
 & = & \ds (4\tau+1)^{4}\Phi(\tau).
\end{array} \label{ModD42}
\end{equation}
\ms

This check suggests that the root of $\Delta_2$ could be a modular form, but in fact not: the function $\ds {\twpa}\pa{\frac{1}{2},\tau}$ is 2-periodic but not 1-periodic as we can easily see on its representations.
\bs
\ms

\section{-- Determination of an element $E_{2,N}^{(0)}$}
\ms

Part I Theorem I-5.2 noted the existence of a $0$ valuation element in $M_{2}(\Gamma_0(p))$ and its importance in the algorithmic description of spaces $(M_{2k}(\Gamma_0(p)))_{k\in\N^*}$ for $N \se 2$.

The proposed demonstration was based on Eisenstein's $E_{2}$ series. We will give a different demonstration here and provide an explicit form for an $E_{2,N}^{(0)}$ element.

\begin{Thm}
Either $N\in\N^*$, then the function defined by
\begin{equation}\label{Polsym1}
\Phi_N(\tau) = \frac{-3}{N-1} \sum_{k=1}^{N-1}{\wpa}(k\tau,N\tau)
\end{equation}
is an element of $M_{2}(\Gamma_0(N))$ unitary with valuation $0$.
\end{Thm}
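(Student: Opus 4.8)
The plan is to identify $\Phi_N$ with the classical weight-$2$ Eisenstein combination on $\Gamma_0(N)$: I would prove that
\[
\Phi_N(\tau)\ =\ \frac{N\,E_2(N\tau)-E_2(\tau)}{N-1},\qquad E_2(\tau)=1-24\sum_{n\ge1}\sigma_1(n)q^n,\ \ q=e^{2i\pi\tau}
\]
(so the statement really concerns $N\ge2$; for $N=1$ the factor $\frac{-3}{N-1}$ is undefined and $M_2(\Gamma_0(1))=\{0\}$ in any case). Once this identity is established, the three conclusions follow at once: $\tau\mapsto N\,E_2(N\tau)-E_2(\tau)$ is a classical element of $M_2(\Gamma_0(N))$, and since $E_2(\tau)=1-24q-72q^2-\cdots$ while $E_2(N\tau)=1+O(q^N)$ its $q$-expansion is $(N-1)+24q+O(q^2)$, whence $\Phi_N=1+\frac{24}{N-1}\,q+O(q^2)$ is holomorphic, unitary, and of valuation $0$ at $i\infty$.

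To prove the identity I would expand each summand in powers of $q$. For $1\le k\le N-1$ one has $k\tau=\frac kN(N\tau)$ with $\frac kN\in\,]0,1[$, so the Fourier representation $(\ref{WpF1})$ is available with $z=k\tau$ and modulus $N\tau$; expanding $1/\sin(n\pi N\tau)$ and the cosine in $q$ gives
\[
\wpa(k\tau,N\tau)\ =\ c_{N\tau}-4\sum_{n\ge1}\frac{n\,\bigl(q^{nk}+q^{n(N-k)}\bigr)}{1-q^{nN}},
\]
where the constant $c_\tau$ of $(\ref{WpF1})$ equals $-\frac13E_2(\tau)$ — this is exactly the value the remark following $(\ref{Wpt2Wp})$ points to, and it also drops out of comparing $(\ref{WpF1})$ with the Euler-type representation $(\ref{WpF3})$ and letting $q\to0$. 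Summing over $k$, using that $k\mapsto N-k$ permutes $\{1,\dots,N-1\}$, that $\sum_{k=1}^{N-1}q^{nk}=\frac{q^n-q^{nN}}{1-q^n}$, and that $\frac{q^n-q^{nN}}{(1-q^n)(1-q^{nN})}=\frac{q^n}{1-q^n}-\frac{q^{nN}}{1-q^{nN}}$, one obtains
\[
\sum_{k=1}^{N-1}\wpa(k\tau,N\tau)\ =\ -\frac{N-1}{3}E_2(N\tau)-8\sum_{n\ge1}\Bigl(\frac{nq^n}{1-q^n}-\frac{nq^{nN}}{1-q^{nN}}\Bigr),
\]
which, by $\sum_{n\ge1}\frac{nq^{nm}}{1-q^{nm}}=\sum_{\ell\ge1}\sigma_1(\ell)q^{m\ell}=\frac{1}{24}\bigl(1-E_2(m\tau)\bigr)$ for $m=1$ and $m=N$, equals $\frac13\bigl(E_2(\tau)-N\,E_2(N\tau)\bigr)$. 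Multiplying by $\frac{-3}{N-1}$ yields the asserted formula for $\Phi_N$.

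For the input used above: that $N\,E_2(N\tau)-E_2(\tau)$ lies in $M_2(\Gamma_0(N))$ follows by writing the quasi-modularity of $E_2$ as $E_2(\gamma\tau)=(c\tau+d)^2E_2(\tau)+\lambda\,c(c\tau+d)$ (with $\lambda$ a fixed constant) and observing that for $\gamma=\begin{pmatrix}a&b\\c&d\end{pmatrix}\in\Gamma_0(N)$ one has $N\gamma\tau=\gamma'(N\tau)$ with $\gamma'=\begin{pmatrix}a&bN\\c/N&d\end{pmatrix}\in\Gamma_0(1)$ and $(c/N)(N\tau)+d=c\tau+d$, so the quasi-modular defects of $N\,E_2(N\cdot)$ and $E_2(\cdot)$ coincide and cancel in the difference; the same computation gives holomorphy at every cusp. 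As an alternative that never needs $c_\tau$, the transformation law $\Phi_N(\gamma\tau)=(c\tau+d)^2\Phi_N(\tau)$ can be read directly off $\wpa=\pi^{-2}\wp$: since $\wp$ depends only on the lattice and is homogeneous of degree $-2$, the relation $\Z+N\gamma\tau\,\Z=(c\tau+d)^{-1}(\Z+N\tau\,\Z)$ gives $\wpa(k\gamma\tau,N\gamma\tau)=(c\tau+d)^2\wpa\bigl((ka\bmod N)\tau,\,N\tau\bigr)$, and $\gcd(a,N)=1$ makes $k\mapsto ka\bmod N$ a permutation of $\{1,\dots,N-1\}$, so the summands are merely reshuffled.

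The one genuinely non-routine ingredient is thus the value $c_\tau=-\frac13E_2(\tau)$ (or, if one proves modularity by the lattice route, merely its constant term $-\frac13$): with that in hand, the $q$-expansion bookkeeping, the modularity of $N\,E_2(N\tau)-E_2(\tau)$, and the unitarity-plus-valuation-$0$ claims are all either standard facts about $E_2$ or a one-line inspection of the expansion.
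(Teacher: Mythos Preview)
Your proof is correct, and it takes a genuinely different route from the paper's. The paper verifies the modularity of $\Phi_N$ \emph{directly} from the Weierstrass double-series definition $(\ref{Wp})$: it writes $(c\tau+d)^{-2}\Phi_N(\gamma\tau)$ as a lattice sum, performs the unimodular change of variables $\begin{pmatrix}p\\q\end{pmatrix}=\begin{pmatrix}d&Nb\\c/N&a\end{pmatrix}\begin{pmatrix}n\\m\end{pmatrix}$, and then exploits that $k\mapsto ak\bmod N$ permutes $\{1,\dots,N-1\}$ to match terms. Valuation and unitarity come from $(\ref{WpF3})$ via $\lim_{\tau\to i\infty}\wpa(k\tau,N\tau)=-\tfrac13$. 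Your approach instead \emph{identifies} $\Phi_N$ with the known Eisenstein combination $\frac{NE_2(N\tau)-E_2(\tau)}{N-1}$ by a $q$-expansion computation from $(\ref{WpF1})$, after which everything is inherited from the classical theory of $E_2$; the paper in fact leaves precisely this identification (in the form $G_{2,N}=\sum\wp(k\tau,N\tau)$) as an exercise for the reader just after its proof, and explicitly presents its own argument as an alternative to the $E_2$-based one from Part~I.

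Your ``alternative'' lattice argument---$\wp$ depending only on the lattice and being homogeneous of degree $-2$, combined with the permutation $k\mapsto ak\bmod N$---is the conceptual distillation of the paper's long direct computation; what you state in two lines is exactly what the paper's manipulations of $\Psi_N$ accomplish. The trade-off: your main route is shorter and leverages the well-developed $E_2$ machinery (quasi-modularity, cusp behavior), at the cost of importing the identification $c_\tau=-\tfrac13E_2(\tau)$ as an external fact; the paper's route is self-contained within the Weierstrass framework of $\wpa$ and needs no outside input beyond $(\ref{Wp})$ and $(\ref{WpF3})$.
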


\begin{proof}
According to the $(\ref{WpF3})$ relationship, $\Phi_N$ is $1$-periodic and
\begin{equation}
\forall k\in \llbracket1,N-1\rrbracket, \ \ \lim_{\tau\to i\infty} {\wpa}(k\tau,N\tau) = - \frac{1}{3}.
\end{equation}
The result is $\Phi_N = 1 + O(q)$ and $\nu(\Phi_N) = 0$.\ms

The existence of a limit in any cusp results from $(\ref{Wp})$, it remains to study the action of $\Gamma_0(N)$ on $\Phi_N$.

\begin{equation}
\begin{array}{lcl}
\ds(N-1)\pi^2 \Phi_N(\tau) & = & \ds \pa{\sum_{k=1}^{N-1}\frac{1}{k^2}}\frac{1}{\tau^2} + \sum_{n,m}^* \sum_{k=1}^{N-1} \pa{\frac{1}{(n+(k+mN)\tau)^2} - \frac{1}{(n+mN\tau)^2}}
\end{array}
\end{equation}

Let $\begin{pmatrix} a & b\\ c & d\end{pmatrix}$ be in $\Gamma_0(N)$, we pose $c = Nc'$, $\ds \Psi_N(\tau) = (N-1)\pi^2(c\tau+d)^{-2} \Phi_N\pa{\frac{a\tau+b}{c\tau+d}}$ and:

\begin{equation}
\begin{array}{lcl}
\ds\Psi_N(\tau) & = & \ds \pa{\sum_{k=1}^{N-1}\frac{1}{k^2}}\frac{1}{(a\tau+b)^2} + \sum_{n,m}^* \sum_{k=1}^{N-1} \Big(\frac{1}{((c\tau+d)n+(k+mN)(a\tau+b))^2} - \frac{1}{((c\tau+d)n+mN(a\tau+b))^2}\Big)\\
& = &  \ds \pa{\sum_{k=1}^{N-1}\frac{1}{k^2}}\frac{1}{(a\tau+b)^2} + \sum_{n,m}^* \sum_{k=1}^{N-1} \Big(\frac{1}{((ak + amN+cn)\tau + (bk+bmN + dn))^2}\\
& & \ds {\hskip 7cm} - \frac{1}{(( amN+cn)\tau + (bmN + dn))^2}\Big)\\
\end{array}
\end{equation}
\ms

The transformation
$\ds\left\{
\begin{array}{lcl}
p & = & dn+Nbm\\
q & = & c'n+am
\end{array}\right.$
is an isomorphism of the lattice $Z^2$ with
$\ds\left\{
\begin{array}{lcl}
n & = & ap-Nbq\\
m & = & -c'p+dq
\end{array}\right.$.
As a result,

\begin{equation}
\begin{array}{lcl}
\ds \Psi_N(\tau) & = & \ds \pa{\sum_{k=1}^{N-1}\frac{1}{k^2}}\frac{1}{(a\tau+b)^2} + \sum_{p,q}^* \sum_{k=1}^{N-1} \pa{\frac{1}{((ak + qN)\tau + (bk+p))^2} - \frac{1}{(qN\tau + p)^2}}.
\end{array}
\end{equation}

We then notice that

\begin{equation}
\begin{array}{lcl}
\ds S_{N}(\tau) & = & \ds \pa{\sum_{k=1}^{N-1}\frac{1}{k^2}}\pa{\frac{1}{(a\tau+b)^2} - \frac{1}{(a\tau)^2}}
 + \sum^* \sum_{k=1}^{N-1} \frac{1}{((ak + qN)\tau + (bk+p))^2} - \frac{1}{((ak + qN)\tau + p)^2}\\
& = & \ds \sum_{(p,q)\in \Z^2} \sum_{k=1}^{N-1} \frac{1}{((ak + qN)\tau + (bk+p))^2} -  \frac{1}{((ak + qN)\tau + p)^2}\\
& = & 0
\end{array}
\end{equation}
with a licit variable change on $p$, and therefore

\begin{equation}
\begin{array}{lcl}
\ds \Psi_N(\tau) & = & \ds \ds \pa{\sum_{k=1}^{N-1}\frac{1}{k^2}}\pa{\frac{1}{(a\tau)^2}} + \sum_{p,q}^* \sum_{k=1}^{N-1} \frac{1}{((ak + qN)\tau + p)^2} - \frac{1}{(qN\tau + p)^2}
\end{array}
\end{equation}

For each $k\in \llbracket1,N-1\rrbracket$ $ak = u_k q + r_k$ with $(u_k,r_k)\in \Z\times \llbracket0,N-1\rrbracket$. We also know that $ad-c'Nb = $1, and therefore $r_k \not= $0 and even $\{r_k, \ 1\ie k\ie N-1\} = \{1,\ldots,N-1\}$ because $\overline{a}$ is invertible in $Z/NZ$. You can also write: for $ r\in \llbracket1,N-1\rrbracket$ there is a single $(v_r,k_r)\in \Z\times \llbracket1,N-1\rrbracket$ such that $r = v_r N + ak_r$. By reordering,

\begin{equation}
\begin{array}{lcl}
\ds \Psi_N(\tau) & = & \ds \ds \pa{\sum_{k=1}^{N-1}\frac{1}{k^2}}\pa{\frac{1}{(a\tau)^2}} + \sum_{p,q}^* \sum_{r=1}^{N-1} \frac{1}{((r + (q-v_r)N)\tau + p)^2} - \frac{1}{(qN\tau + p)^2}\\
 & = & \ds \pa{\sum_{k=1}^{N-1}\frac{1}{k^2}}\pa{\frac{1}{(a\tau)^2}} + \sum_{r=1}^{N-1} \sum_{p,q}^* \frac{1}{((r + (q-v_r)N)\tau + p)^2} - \frac{1}{(qN\tau + p)^2}
\end{array}
\end{equation}

If $p\not=0$, $\ds \sum_{q} \frac{1}{((r + (q-v_r)N)\tau + p)^2} - \frac{1}{(qN\tau + p)^2} = \sum_{q} \frac{1}{((r + qN)\tau + p)^2} - \frac{1}{(qN\tau + p)^2}$

If $p=0$,
\begin{equation*}
\begin{array}{lcl}
\ds \sum_{q\not=0} \frac{1}{((r + (q-v_r)N)\tau)^2} - \frac{1}{(qN\tau)^2} & = & \ds \frac{1}{(r\tau)^2}-\frac{1}{((r - v_rN)\tau)^2}+\sum_{q\not=0} \frac{1}{((r + qN)\tau)^2} - \frac{1}{(qN\tau)^2}\\
& = & \ds \frac{1}{(r\tau)^2}-\frac{1}{(ak_r\tau)^2}+\sum_{q\not=0} \frac{1}{((r + qN)\tau)^2} - \frac{1}{(qN\tau)^2}
\end{array}
\end{equation*}

So, since $\{k_1,\ldots,k_{N-1}\} = \{1,\ldots,N-1\}$,

\begin{equation}
\begin{array}{lcl}
\ds \Psi_N(\tau) & = & \ds \pa{\sum_{k=1}^{N-1}\frac{1}{k^2}}\frac{1}{\tau^2} +  \sum_{p,q}^* \sum_{r=1}^{N-1} \frac{1}{((r + qN)\tau + p)^2} - \frac{1}{(qN\tau + p)^2}\\
& = & \ds (N-1)\Phi_N(\tau)
\end{array}
\end{equation}

which is the result of modularity sought.
\end{proof}
\bs

The interested reader may seek to verify the following equality:
\begin{equation}
G_{2,N}(\tau) = \pi^2 \sum_{k=1}^{N-1}{\wpa}(k\tau,N\tau) = \sum_{k=1}^{N-1}{\wp}(k\tau,N\tau)
\end{equation}
where $G_{2,N}(\tau)$ is the exceptional Eisenstein series reminded in Part I, (see \cite{Miya} or \cite{Diam}). Note that for $N = 2$, $3$, $5$ and $7$, it is a consequence of the relationship $\dim(M_2(\Gamma_0(N))) = 1$.
\bs

\begin{Thm}
Either $N\se 1$, we have the representation of Fourier
\begin{equation}
\frac{-3}{N-1} \sum_{k=1}^{N-1}{\wpa}(k\tau,N\tau) = 1 - \frac{6}{N-1}\sum_{n=1}^{+\infty} \frac{1}{\sin(n\pi\tau)^2} + \frac{6N}{N-1}\sum_{n=1}^{+\infty} \frac{1}{\sin(n\pi N\tau)^2}.
\end{equation}
\end{Thm}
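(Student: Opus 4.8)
The natural starting point is the Euler-type representation (\ref{WpF3}). Applying it with the point $k\tau$ and the lattice parameter $N\tau$ in place of $(z,\tau)$ gives, for each $k\in\llbracket 1,N-1\rrbracket$,
\[
{\wpa}(k\tau,N\tau)=-\frac13+\frac{1}{\sin(\pi k\tau)^2}
+\sum_{n=1}^{+\infty}\left(\frac{1}{\sin(\pi(nN+k)\tau)^2}+\frac{1}{\sin(\pi(nN-k)\tau)^2}-\frac{2}{\sin(\pi nN\tau)^2}\right).
\]
Since $\tau\in{\cal H}$, one has $|\sin(\pi m\tau)|^{-2}=O(e^{-2\pi m\,\Im\tau})$, so every series below converges absolutely and all rearrangements are licit; I would record this remark once at the outset. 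Summing the display over $k=1,\dots,N-1$ produces the constant $-\tfrac{N-1}{3}$, a term $\sum_{k=1}^{N-1}\sin(\pi k\tau)^{-2}$, the term $-2(N-1)\sum_{n\ge1}\sin(\pi nN\tau)^{-2}$, and the double sum
\[
\Sigma:=\sum_{k=1}^{N-1}\sum_{n=1}^{+\infty}\left(\frac{1}{\sin(\pi(nN+k)\tau)^2}+\frac{1}{\sin(\pi(nN-k)\tau)^2}\right).
\]

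The crux is the reindexing of $\Sigma$. I would argue that as $n$ runs over $\N^*$ and $k$ over $\llbracket1,N-1\rrbracket$, the integers $nN+k$ enumerate \emph{exactly once} each integer $m\ge N+1$ with $N\nmid m$ (write $n=\lfloor m/N\rfloor$, $k=m-nN$), while the integers $nN-k$ enumerate exactly once each integer $m\ge1$ with $N\nmid m$ (for fixed $n$, $nN-k$ runs through $\{(n-1)N+1,\dots,nN-1\}$). Hence, writing $\Sigma' := \sum_{m\ge1,\,N\nmid m}\sin(\pi m\tau)^{-2}$, the first block contributes $\Sigma'-\sum_{k=1}^{N-1}\sin(\pi k\tau)^{-2}$ (the missing terms being precisely $m=1,\dots,N-1$) and the second block contributes $\Sigma'$. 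Therefore $\sum_{k=1}^{N-1}\sin(\pi k\tau)^{-2}+\Sigma=2\Sigma'$, and the standalone term $\sum_{k=1}^{N-1}\sin(\pi k\tau)^{-2}$ disappears cleanly.

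Finally I would substitute $\Sigma'=\sum_{m\ge1}\sin(\pi m\tau)^{-2}-\sum_{n\ge1}\sin(\pi nN\tau)^{-2}$ and collect: the $\sin(\pi nN\tau)^{-2}$ terms total $-2-2(N-1)=-2N$, yielding
\[
\sum_{k=1}^{N-1}{\wpa}(k\tau,N\tau)=-\frac{N-1}{3}+2\sum_{n=1}^{+\infty}\frac{1}{\sin(\pi n\tau)^2}-2N\sum_{n=1}^{+\infty}\frac{1}{\sin(\pi nN\tau)^2}.
\]
Multiplying by $\dfrac{-3}{N-1}$ gives the asserted formula. The only genuinely delicate point is the double-counting argument for $\Sigma$ — in particular keeping track of the fact that the ``diagonal'' term $\sum_{k=1}^{N-1}\sin(\pi k\tau)^{-2}$ is exactly what is needed to turn the incomplete range $m\ge N+1$ of the $nN+k$ block into the full range; everything else is routine bookkeeping, and absolute convergence makes the manipulations rigorous.
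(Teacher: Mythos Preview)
Your argument is correct. The paper states this theorem without proof, but your approach via the representation (\ref{WpF3}) is precisely the one the paper sets up and implicitly intends: it derives (\ref{WpF3}) from Euler's identity, and already invokes (\ref{WpF3}) in the proof of the preceding theorem to read off the limit $\wpa(k\tau,N\tau)\to-\tfrac13$. Your reindexing of the double sum $\Sigma$ is the only nontrivial step, and you have handled it cleanly --- the observation that $\{nN+k : n\ge1,\,1\le k\le N-1\}$ and $\{nN-k : n\ge1,\,1\le k\le N-1\}$ each cover the non-multiples of $N$ exactly once (the first missing $1,\dots,N-1$, the second not) is correct, and the cancellation of the standalone term $\sum_{k=1}^{N-1}\sin(\pi k\tau)^{-2}$ follows. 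The absolute-convergence remark justifies all rearrangements. Nothing is missing.
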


Note that we can provide a more numerically efficient version of $\Phi_N$ by remembering that $z\mapsto {\wpa}(z,N\tau)$ is an elliptic function of periods $1$ and $N\tau$. Therefore, for $k\in \llbracket1,N-1\rrbracket$, ${\wpa}(k\tau,N\tau) = {\wpa}((N-k)\tau,N\tau)$. Thus, according to the parity of $N$:
\begin{equation}
\Phi_N(\tau) = 
\left\{
\begin{array}{ll}
\ds \frac{1}{n} \sum_{k=1}^n {\wpa}(k\tau,(2n+1)\tau) & \text{when} \ N = 2n+1\\
\ds \frac{1}{n-\frac{1}{2}}\pa{\frac{1}{2}{\wpa}(n\tau,2n\tau) + \sum_{k=1}^{n-1} {\wpa}(k\tau,(2n)\tau)} & \text{when} \ N = 2n.
\end{array}
\right.
\end{equation}
\bs

Using the ratings from Part I for generic bases ${\cal B}_{2k}(\Gamma_0(N)) = (E_{2k,N}^{(s)})_{0\ie s \ie d_{2k}(N)-1}$, we can choose:
\begin{Cor}
\begin{equation}
E_{2,2}^{(0)} = -3 {\wpa}(\tau,2\tau) = 1+24q+24q^2+96q^3+24q^4+144q^5+96q^6+O(q^7) \in M_2(\Gamma_0(2)),
\end{equation}
\begin{equation}
E_{2,3}^{(0)} = -3 {\wpa}(\tau,3\tau) = 1+12q+36q^2+12q^3+84q^4+72q^5+36q^6+O(q^7) \in M_2(\Gamma_0(3)).
\end{equation}
\end{Cor}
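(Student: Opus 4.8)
The plan is, for $N=2$ and $N=3$, to collapse the defining sum $\Phi_N$ to a single Weierstrass term, identify it as an admissible $E_{2,N}^{(0)}$, and then extract its $q$-expansion from Theorem I-3.2.

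First I would simplify $\Phi_N(\tau) = \frac{-3}{N-1}\sum_{k=1}^{N-1}{\wpa}(k\tau,N\tau)$. For $N=2$ the sum has a single term, so $\Phi_2(\tau) = -3\,{\wpa}(\tau,2\tau)$ at once. For $N=3$, recall (as already observed in Section~3, and because $z\mapsto{\wpa}(z,N\tau)$ is even and $\Lambda_{N\tau}$-periodic) that ${\wpa}(k\tau,N\tau) = {\wpa}((N-k)\tau,N\tau)$; hence ${\wpa}(\tau,3\tau) = {\wpa}(2\tau,3\tau)$ and $\Phi_3(\tau) = \frac{-3}{2}\cdot 2\,{\wpa}(\tau,3\tau) = -3\,{\wpa}(\tau,3\tau)$. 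By Theorem I-3.1 each $\Phi_N$ lies in $M_2(\Gamma_0(N))$, is unitary and has valuation $0$, which is exactly the defining property of the generic-basis element $E_{2,N}^{(0)}$ in the notation of Part~I (and, since $\dim M_2(\Gamma_0(2)) = \dim M_2(\Gamma_0(3)) = 1$, this element is in fact unique). So $E_{2,N}^{(0)} = -3\,{\wpa}(\tau,N\tau)$ for $N = 2,3$, which is the first equality in each display.

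It then remains to compute the Fourier coefficients. Feed $N = 2$ and $N = 3$ into Theorem I-3.2 and expand the trigonometric sums: writing $w = e^{2i\pi z}$ one has $\sin(\pi z)^2 = -\tfrac14\,w^{-1}(1-w)^2$, so for $\mathrm{Im}\,z > 0$
\begin{equation*}
\frac{1}{\sin(\pi z)^2} \;=\; \frac{-4w}{(1-w)^2} \;=\; -4\sum_{d\geq 1} d\,w^d .
\end{equation*}
Specialising to $z = n\tau$ and $z = nN\tau$ with $q = e^{2i\pi\tau}$ and regrouping the double sum by $m = nd$ gives
\begin{equation*}
\sum_{n\geq 1}\frac{1}{\sin(n\pi\tau)^2} = -4\sum_{m\geq 1}\sigma_1(m)\,q^{m}, \qquad \sum_{n\geq 1}\frac{1}{\sin(nN\pi\tau)^2} = -4\sum_{m\geq 1}\sigma_1(m)\,q^{Nm},
\end{equation*}
with $\sigma_1(m) = \sum_{d\mid m} d$. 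Substituting into Theorem I-3.2,
\begin{equation*}
E_{2,2}^{(0)} = 1 + 24\sum_{m\geq 1}\sigma_1(m)q^m - 48\sum_{m\geq 1}\sigma_1(m)q^{2m}, \qquad E_{2,3}^{(0)} = 1 + 12\sum_{m\geq 1}\sigma_1(m)q^m - 36\sum_{m\geq 1}\sigma_1(m)q^{3m},
\end{equation*}
and reading off the coefficients through $q^6$ with $\sigma_1 = 1,3,4,7,6,12,\dots$ reproduces the two announced expansions; the memberships $E_{2,2}^{(0)}\in M_2(\Gamma_0(2))$ and $E_{2,3}^{(0)}\in M_2(\Gamma_0(3))$ are contained in Theorem I-3.1.

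There is no real obstacle: the corollary is a specialisation of Theorems I-3.1 and I-3.2. The only mild points of care are recalling precisely what Part~I means by $E_{2,N}^{(0)}$ (the unitary, valuation-$0$ member of the generic basis, here forced by one-dimensionality) and justifying the interchange of summation in the Lambert-series step, which is legitimate because every series involved converges absolutely for $\tau\in{\cal H}$. As a variant, the $N = 2$ case can rest on the direct check ${\wpa}(\tau,2\tau)\in M_2(\Gamma_0(2))$ performed in Section~2 instead of Theorem~I-3.1.
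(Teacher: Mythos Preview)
Your proposal is correct and follows exactly the route the paper intends: the corollary is stated without proof in the paper, immediately after Theorems I-3.1 and I-3.2, as their direct specialisation to $N=2,3$; your reduction of $\Phi_N$ to a single term via the evenness/periodicity of $\wpa$, the appeal to Theorem I-3.1 for modularity, and the Lambert-series expansion of Theorem I-3.2 to read off the coefficients are precisely what the paper leaves implicit. Your added remark that one-dimensionality of $M_2(\Gamma_0(2))$ and $M_2(\Gamma_0(3))$ pins $E_{2,N}^{(0)}$ down uniquely is a welcome clarification.
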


\bs

To finish this paragraph, and without any demonstration, we note that $(\ref{Polsym1})$ is an access point to a much more general toolbox.
\ms

If $P\in \C[X_1,\ldots,X_{N-1}]$ is a homogeneous symmetric polynomial of $k$ degree, then the application

\[\tau \mapsto P\pa{{\wpa}(\tau,N\tau), \ldots, {\wpa}((N-1)\tau,N\tau)}\]

is an element of $M_{2k}(\Gamma_0(N))$. This process will be used in the $N = 7$ case.
\ms

Just check that for everything $k\in \N^*$, $\tau \mapsto \ds \sum_{n=1}^{N-1}{\wpa}(n\tau,N\tau)^k$ belongs to $M_{2k}(\Gamma_0(N))$, thanks to Newton's formulae on symmetric polynomials.
\ms
\bs
\ms

\section{ -- Structure and bases of $(M_{2k}(\Gamma_0(N)))_{k\in \N^*}$, $1\ie N \ie 10$}
\ms

In this paragraph, we are trying to determine explicit unitary upper triangular bases for all spaces $(M_{2k}(\Gamma_0(N)))_{k\in \N^*}$, $1\ie N \ie 10$. By explicit, we mean representable using $\eta$-products or elliptic functions $\wpa$ and $\twpa$, which naturally provides other representations allowing serial Fourier development at any order.\ms

Using the notations from part I, $d_{2k}(N)$ is the dimension of $M_{2k}(\Gamma_0(N))$, and by ${\cal B}_{2k}(\Gamma_0(N)) = (E_{2k,N}^{(s)})_{0\ie s \ie d_{2k}(N)-1}$ a $M_{2k}(\Gamma_0(N))$ unitary upper triangular basis. Such a basis exists but is not unique, unlike the strictly increasing sequence of valuations $(\nu(E_{2k,N}^{(s)}))_{0\ie s \ie d_{2k}(N)-1}$.
\ms

Theorem I-7.3 in the first part indicates that bases for spaces $(M_{2k}(\Gamma_0(N)))_{k\se \frac{1}{2}\rho_N+1}$ are calculable as soon as we know bases $({\cal B}_{2k}(\Gamma_0(N)))_{k\ie \frac{1}{2}\rho_N}$ as well as $(E_{\rho_N+2,N}^{(s)})_{0\ie s \ie \nu(\Delta(N))-1}$, the beginning of a ${\cal B}_{\rho_N+2}(\Gamma_0(N))$ basis.
\ms

When $d_{\rho_N}(N) = \nu(\Delta_N) +1$, and it will be the case for $1\ie N \ie 10$, it is enough to know $({\cal B}_{2k}(\Gamma_0(N)))_{k\ie \frac{1}{2}\rho_N}$. We can indeed choose $E_{\rho_N+2,N}^{(s)} = E_{\rho_N,N}^{(s)} E_{2,N}^{(0)}$ for $0\ie s \ie \nu(\Delta_N)-1$.
\ms

Theorem I-7.3 then takes the following form:

\begin{Thm}\label{ThmStruct101}
Let $N$ be a positive integer such as $d_{\rho_N}(N) = \nu(\Delta_N) +1$, then
\begin{equation}
\forall k\se \frac{\rho_N}{2}, \ \ M_{2k}(\Gamma_0(N)) = \Delta_N.M_{2k-\rho_N}(\Gamma_0(N)) \oplus Vect\pa{E_{\rho_N,N}^{(s)}[E_{2,N}^{(0)}]^{k-\frac{\rho_N}{2}} \ / \ 0\ie s < \nu(\Delta_N)}.
\end{equation}
Therefore, if $k\in \N^*$ and $k = q\frac{\rho_N}{2}+r$ with $1\ie r \ie \frac{\rho_N}{2}$,

\begin{equation}
 M_{2k}(\Gamma_0(N)) = \Delta_N^{q}. M_{2r}(\Gamma_0(N)) \bigoplus_{n=0}^{q-1}  \Delta_N^n .Vect\pa{E_{\rho_N,N}^{(s)}[E_{2,N}^{(0)}]^{k-(n+1)\frac{\rho_N}{2}} \ / \ 0\ie s < \nu(\Delta_N)}.\label{Base1}
\end{equation}
\end{Thm}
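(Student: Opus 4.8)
The plan is to derive Theorem \ref{ThmStruct101} as a direct specialization and iteration of the already-available Theorem I-7.3, using the hypothesis $d_{\rho_N}(N) = \nu(\Delta_N)+1$ only to justify the explicit choice of the top basis elements $E_{\rho_N+2,N}^{(s)}$. The first step is to record Theorem I-7.3 in the form already quoted in the running text just before the statement: for every $k \ge \frac{\rho_N}{2}+1$ one has a direct sum decomposition of $M_{2k}(\Gamma_0(N))$ into $\Delta_N \cdot M_{2k-\rho_N}(\Gamma_0(N))$ and a complement spanned by products of the form $E_{\rho_N+2,N}^{(s)} \cdot [E_{2,N}^{(0)}]^{k-\frac{\rho_N}{2}-1}$ for $0 \le s < \nu(\Delta_N)$. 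Under the dimension hypothesis, the discussion preceding the theorem shows we may take $E_{\rho_N+2,N}^{(s)} = E_{\rho_N,N}^{(s)} E_{2,N}^{(0)}$; substituting this identity turns the exponent $k-\frac{\rho_N}{2}-1$ on $E_{2,N}^{(0)}$ into $k-\frac{\rho_N}{2}$, which yields the first displayed equation exactly. One must also check the boundary value $k = \frac{\rho_N}{2}$ separately: there the claimed right-hand side is $\Delta_N \cdot M_0(\Gamma_0(N)) \oplus \mathrm{Vect}(E_{\rho_N,N}^{(s)} : 0 \le s < \nu(\Delta_N))$; since $M_0 = \C$ and $\Delta_N$ spans the one-dimensional space of forms of valuation $\ge \nu(\Delta_N)$ in weight $\rho_N$, while the $E_{\rho_N,N}^{(s)}$ with $s < \nu(\Delta_N)$ furnish the lower-valuation part of a unitary upper triangular basis, this is just the statement that $\mathcal B_{\rho_N}(\Gamma_0(N))$ is a basis — so the case holds by definition of the generic basis.

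The second step is the iteration producing \eqref{Base1}. Writing $k = q\frac{\rho_N}{2} + r$ with $1 \le r \le \frac{\rho_N}{2}$, I would apply the first displayed decomposition repeatedly: peel off one factor of $\Delta_N$ at a time, replacing $M_{2k}$ by $\Delta_N \cdot M_{2k-\rho_N}$ plus the span term, then expand $M_{2k-\rho_N}$ the same way, and so on. After $n$ steps ($0 \le n \le q-1$) the accumulated factor in front is $\Delta_N^n$, the residual modular space has weight $2k - n\rho_N = 2(k - n\frac{\rho_N}{2})$, and the freshly produced span term is $\Delta_N^n \cdot \mathrm{Vect}(E_{\rho_N,N}^{(s)}[E_{2,N}^{(0)}]^{(k-n\frac{\rho_N}{2}) - \frac{\rho_N}{2}})$, i.e. the exponent on $E_{2,N}^{(0)}$ is $k - (n+1)\frac{\rho_N}{2}$, matching \eqref{Base1}. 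The recursion runs exactly $q$ times and terminates at weight $2k - q\rho_N = 2r$ with $1 \le r \le \frac{\rho_N}{2}$, which is below the range where the first decomposition applies, leaving the leading term $\Delta_N^q \cdot M_{2r}(\Gamma_0(N))$ untouched. A clean way to present this without induction bookkeeping is to prove the single-step identity, then argue by induction on $q$ that the $q$-fold application gives \eqref{Base1}.

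The one genuine point requiring care — and the only thing I would call a potential obstacle — is verifying that the sums at each stage remain \emph{direct}, i.e. that no nonzero element of $\Delta_N^n \cdot \mathrm{Vect}(\cdots)$ lies in $\Delta_N^{n+1}\cdot M_{\cdots} + (\text{later span terms})$. This reduces, after dividing by the common power $\Delta_N^n$ (legitimate since $M_\bullet(\Gamma_0(N))$ is an integral domain — multiplication by the nonzero form $\Delta_N$ is injective), to the directness already asserted in the single-step decomposition from Theorem I-7.3; so the multi-step directness follows by the same divide-by-$\Delta_N$ argument applied inductively. I would also note that dimensions match: $\dim M_{2k}(\Gamma_0(N)) = \dim M_{2k-\rho_N}(\Gamma_0(N)) + \nu(\Delta_N)$ is precisely the dimension recursion underlying Theorem I-7.3, and summing it telescopically over the $q$ stages confirms the summands in \eqref{Base1} account for the full dimension, which gives an independent check that the asserted direct sum is all of $M_{2k}(\Gamma_0(N))$.
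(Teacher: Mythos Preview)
Your proposal is correct and follows essentially the same approach as the paper: the paper does not give a standalone proof of this theorem but presents it explicitly as the form that Theorem I-7.3 takes once one substitutes $E_{\rho_N+2,N}^{(s)} = E_{\rho_N,N}^{(s)} E_{2,N}^{(0)}$, a choice justified by the hypothesis $d_{\rho_N}(N) = \nu(\Delta_N)+1$. Your write-up is in fact more thorough than the paper's, since you spell out the boundary case $k=\rho_N/2$, the inductive iteration yielding \eqref{Base1}, and the directness and dimension checks, none of which the paper makes explicit.
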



The characteristic value $N = 11$ is the first for which $d_{\rho_N}(N) < \nu(\rho_N)+1$, and in this case we must of course apply Theorem I-7.3. 
Note that, quickly, the hypothesis $d_{\rho_N}(N) = \nu(\rho_N)+$1 becomes systematically false.
\bs
\bs
\ms

\subsection{ -- Structure and bases of $(M_{2k}(\Gamma_0(1)))_{k\in \N^*}$}
\ms

The $N = 1$ case is well known \cite{Serre,Stein,Ono}, it differs from the generic  $N\se 2$ case because $M_{2}(\Gamma_0(1)) = \{0\}$.
The strong modular unit is the modular discriminant $\Delta_1(\tau) = \Delta(\tau) = \eta(\tau)^{24}$, the first space dimensions table is the following:

\begin{center}
\begin{tabular}{c|c|c|c|c|c|c|c|c}
$2k$   & 2  & 4  & 6  & 8  & 10  & 12  & 14  & 16\\
\hline
$d_{2k}(1)$   & 0  & 1  & 1  & 1  & 1  & 2  & 1  & 2 \\
\end{tabular}
\end{center}
\ms

We have classically
\begin{equation*}
\forall k\se 7, \ \ M_{2k}(\Gamma_0(1)) = Vect(E_{2k,1}^{(0)})\oplus \Delta . M_{2k-12}(\Gamma_0(1)).
\end{equation*}

As for the bases, the reference result built on the Eisenstein series (see \cite{Stein}, for example) is: the space $M_{2k}(\Gamma_0(1))$ is based on the family of $E_4^a E_6^b$, with $(a,b)\in\N^2$ such that $2a+3b = k$. However, this basis is not upper triangular, all these elements are of zero valuation.
\ms

The results of Theorem II-\ref{ThmStruct101} enable one recursively constructing unitary upper triangular bases, even in the absence of an $E_{2,1}^{(0)}$ element. With the usual notations, you can choose:

\begin{equation*}
\forall k\in \N, \ k \se 2, \ \ E_{2k,1}^{(0)} = E_{2k}
\end{equation*}
because Eisenstein's series are well known and admit various representations.

A more economical choice, and in the spirit of Theorem II-\ref{ThmStruct101}, is to ask
\begin{equation*}
\left\{
\begin{array}{lcll}
\ds E_{2k,1}^{(0)} & = & E_4^{k/2} & \text{si} \ k\in 2\N^*\\
\ds E_{2k,1}^{(0)} & = & E_4^{(k-3)/2} E_6 & \text{si} \ k\in 2\N^*+1.
\end{array}
\right.
\end{equation*}
\ms

Anyway, for $k = 6q+r\se 2$, $1\ie r \ie 6$, a $M_{2k}(\Gamma_0(1))$ basis is given by
\begin{equation*}
\left\{
\begin{array}{ll}
\ds (\Delta^n E_{2k-12n,1}^{(0)})_{0\ie n\ie q-1} & \text{si} \ r=1\\
\ds (\Delta^n E_{2k-12n,1}^{(0)})_{0\ie n\ie q} & \text{si} \ 2\ie r \ie 5\\
\ds (\Delta^{q+1})\cup(\Delta^n E_{2k-12n,1}^{(0)})_{0\ie n\ie q} & \text{si} \ r=6.
\end{array}
\right.
\end{equation*}
This is a unitary and upper triangular basis without jump.

In order to be consistent with the announced objectives, equality can be noted
\begin{equation}
\Delta(\tau) = \frac{1}{256} \pa{\twpa\pa{\frac{1}{2},\tau}\twpa\pa{\frac{\tau}{2},\tau}\twpa\pa{\frac{\tau+1}{2},\tau+1}}^2
\end{equation}

obtained, for example, with the $(\ref{Facttwpa})$ factorization of $\twpa$. We will find representations of $E_4$ and $E_6$ depending on $\twpa$ in paragraph 4.4.
\ms

Remember that the $\twpa$ function systematically leads to modular units (no cancellation on $\cal H$), and that this is also the case for products of such functions. Thus, any modular form according to $\Gamma_0(1) = SL_2(\Z)$ is a linear combination of modular units.
\bs
\ms

\subsection{ -- Structure and bases of $(M_{2k}(\Gamma_0(2)))_{k\in \N^*}$}
\ms

The strong modular unit that structures this set of spaces is:

\begin{equation}
\Delta_2(\tau) =  \eta(2\tau)^{16} \eta(\tau)^{-8} = q\prod _{k=1}^{+\infty}{\frac { \left( 1-{q}^{2k} \right) ^{16}}{ \left(1-{q}^{k} \right) ^{8}}} \in M_{4}(\Gamma_0(2)).
\end{equation}
From the ${\twpa}$ factorization, we deduct:
\begin{equation}
\Delta_2(\tau) = \frac{1}{256} {\twpa}(\frac{1}{2},\tau)^2.
\end{equation}

Before we continue, let us note one important point. The zeros of $z\mapsto \twpa(z,\tau)$ are on the lattice $\Lambda_\tau$ and $\frac{1}{2}$ is never on this lattice. This remark spreads easily, and explains why $\twpa$ is a good tool to obtain modular units (not necessarily strong because one does not control the behavior at the cusps).
\ms

Let us remind the first values of $d_{2k}(2)$:

\begin{center}
\begin{tabular}{c|c|c|c|c|c|c|c|c}
$2k$   & 2  & 4  & 6  & 8  & 10  & 12  & 14  & 16\\
\hline
$d_{2k}(2)$   & 1  & 2  & 2  & 3  & 3  & 4  & 4  & 5 \\
\end{tabular}
\end{center}

\bi
\item[$\bullet$] $M_{2}(\Gamma_0(2))$
\ms

We established the $(\ref{M22})$ relationship earlier, $\ds {\wpa}(\tau,2\tau) \in M_2(\Gamma_0(2))$, and we set:
\be
\item[$\star$]
$\ds E_{2,2}^{(0)}(\tau) = -3\wpa(\tau,2\tau) = 1+24q+24q^2+96q^3+24q^4 + O(q^5)$.
\ee
\ms

Other representations are possible. For example based on $\twpa$:
\be

\item[$\star$]
$\ds E_{2,2}^{(0)}(\tau) = {\twpa}(\tau,2\tau) - 2{\twpa}(\frac{1}{2},2\tau)$.

\ee

This equality will be demonstrated by modular argument when studying space $M_{2}(\Gamma_0(4))$.

\bs

\item[$\bullet$] $M_{4}(\Gamma_0(2))$
\ms

We choose $E_{4,2}^{(0)} =[E_{2,2}^{(0)}]^2$, and since $\Delta_2\in M_{4}(\Gamma_0(2))$, we have a unitary upper triangular basis:

\be
\item[$\star$]
$\ds E_{4,2}^{(0)}(\tau) = [E_{2,2}^{(0)}(\tau)]^2 = 9 {\wpa}(\tau,2\tau)^2 = 1+48q+624q^2+1344q^3+5232q^4 + O(q^5)$.

\item[$\star$]
$\ds E_{4,2}^{(1)}(\tau) = \Delta_2(\tau) = \frac{1}{256} {\twpa}(\frac{1}{2},\tau)^2 = q+8q^2+28q^3+64q^4+ O(q^{5})$
\ee
\bs

\item[$\bullet$] The general $M_{2k}(\Gamma_0(2))$ case
\ms

We can apply Theorem II- \ref{ThmStruct101} with $\nu(\Delta_2) = 1$:
\[\forall k\se 3, \ \ M_{2k}(\Gamma_0(2)) = Vect(E_{2k,2}^{(0)})\oplus \Delta_2 . M_{2k-4}(\Gamma_0(2))\]

which is consistent with the dimension table.\ms

We choose $E_{2k,2}^{(0)} =[E_{2,2}^{(0)}]^{k}$, and we deduct a $M_{2k}(\Gamma_0(2))$ unitary upper triangular basis:
\begin{equation}
{\cal B}_{2k}(\Gamma_0(2)) = \pa{[E_{2,2}^{(0)}]^a \Delta_2^b, \ {\rm with} \ (a,b)\in\N^2 \ {\rm such \ that} \ a+2b = k}.
\end{equation}
\bs

Finally, we see that the products of the two modular forms ${\twpa}(\tau,2\tau) - 2{\twpa}(\frac{1}{2},2\tau)$ and ${\twpa}(\frac{1}{2},\tau)^2$ generate, by linear combinations of homogeneous weights, all modular forms according to $\Gamma_0(2)$\ldots \ And therefore also according to $\Gamma_0(1) = SL_2(\Z)$.

We could check, for example, the equalities for the Eisenstein series $E_4$:

\begin{equation}
\begin{array}{lcl}
E_4(\tau) & = & \ds \frac{1}{2} \pa{{\twpa}(\frac{1}{2},\tau)^2 + {\twpa}(\frac{\tau}{2},\tau)^2 + {\twpa}(\frac{\tau+1}{2},\tau+1)^2}\\
         & = & \ds 3 \pa{{\wpa}(\frac{1}{2},\tau)^2 + {\wpa}(\frac{\tau}{2},\tau)^2 + {\wpa}(\frac{1}{2},\tau) {\wpa}(\frac{\tau}{2},\tau)}\\
         &  = & \ds 1+240q+2160q^2+6720q^3+17520q^4 + O(q^{5}).\label{Ei4}
\end{array}
\end{equation}
\ei
\bs
\ms

\bs

\subsection{-- Structure and bases of $(M_{2k}(\Gamma_0(3)))_{k\in \N^*}$}
\ms

The strong modular unit $\Delta_3$ that structures this set of spaces is defined as follows:
\[\Delta_3(\tau) = \eta(3\tau)^{18} \eta(\tau)^{-6} = {q}^{2}\prod _{k=1}^{+\infty} \frac{(1-{q}^{3k})^{18}}{(1-{q}^{k})^{6}} \in M_{6}(\Gamma_0(3)).\]
\ms

Let us remind the first values of $d_{2k}(3)$:

\begin{center}
\begin{tabular}{ c|c|c|c|c|c|c|c|c}
$2k$   & 2  & 4  & 6  & 8  & 10  & 12  & 14  & 16 \\
\hline
$d_{2k}(3)$ & 1  & 2  & 3  & 3  & 4   &  5  & 5   & 6  \\
\end{tabular}
\end{center}

\bi
\item[$\bullet$] $M_{2}(\Gamma_0(3))$
\ms

We have several versions of the $M_2(\Gamma_0(3))$ unitary generator:

\be
\item[$\star$]
The element $E_{2,3}^{(0)}$:

\begin{tabular}{lcl}
$E_{2,3}^{(0)}(\tau) $ & = & $-3{\wpa}(\tau,3\tau) = \ds -3{\wpa}(2\tau,3\tau)$\\
                & = & $\ds -3 {\twpa}(\frac{\tau+1}{2},3\tau) + {\twpa}(\frac{1}{2},3\tau) + {\twpa}(\frac{3\tau}{2},3\tau)$\\
                & = & $1+12q+36q^2+12q^3+84q^4+O(q^5)$
\end{tabular}
\ee

\item[$\bullet$] $M_{4}(\Gamma_0(3))$
\ms

We can choose $\ds E_{4,3}^{(0)} = [E_{2,3}^{(0)}]^2\in M_{4}(\Gamma_0(3))$.

\be
\item[$\star$]
The element $E_{4,3}^{(0)}$:

\begin{tabular}{lcl}
$E_{4,3}^{(0)}(\tau)$ & = & $\ds 9{\wpa}(\tau,3\tau)^2$\\
         &  = & $\ds 1+24q+216q^2+888q^3+1752q^4 + O(q^5)$
\end{tabular}
\ms

In addition, the Eisenstein series $E_4(3\tau)$ is also a unitary element of $M_{4}(\Gamma_0(3))$, with null valuation and not colinear at $E_{4,3}^{(0)}$. By subtraction,

\item[$\star$]
The element $E_{4,3}^{(1)}$:

\begin{tabular}{lcl}
$E_{4,3}^{(1)}(\tau)$ & = & $\ds \frac{1}{8} \pa{3{\wpa}(\tau,3\tau)^2 -  {\wpa}(\frac{1}{2},3\tau)^2 - {\wpa}(\frac{3\tau}{2},3\tau)^2 - {\wpa}(\frac{1}{2},3\tau) {\wpa}(\frac{3\tau}{2},3\tau)}$\\
         &  = & $\ds q+9q^2+27q^3+73q^4+ O(q^5).$
\end{tabular}
\ee

\item[$\bullet$] $M_{6}(\Gamma_0(3))$
\ms

It comes naturally:
\be
\item[$\star$]
The element $E_{6,3}^{(0)}$:

\begin{tabular}{lcl}
$E_{6,3}^{(0)}(\tau)$ & = & $\ds -27 {\wpa}(\tau,3\tau)^3 = [E_{2,3}^{(0)}]^3$\\
         &  = & $\ds 1+36q+540q^2+4356q^3+20556q^4 + O(q^5).$
\end{tabular}

\item[$\star$]
The element $E_{6,3}^{(1)}$:

\begin{tabular}{lcl}
$E_{6,3}^{(1)}(\tau)$ & = & $E_{2,3}^{(0)}.E_{4,3}^{(1)}$\\
& = & $\ds -\frac{3}{8}{\wpa}(\tau,3\tau) \pa{3{\wpa}(\tau,3\tau)^2 -  {\wpa}(\frac{1}{2},3\tau)^2 - {\wpa}(\frac{3\tau}{2},3\tau)^2 - {\wpa}(\frac{1}{2},3\tau) {\wpa}(\frac{3\tau}{2},3\tau)}$\\
         &  = & $\ds q+21q^2+171q^3+733q^4+2166q^5+5535q^6+ O(q^7).$
\end{tabular}

\item[$\star$]
The element $E_{6,3}^{(2)}$:

\begin{tabular}{lcl}
$E_{6,3}^{(2)}(\tau)$ & = & $\Delta_3(\tau)$\\
         &  = & $\ds  {q}^{2}+6{q}^{3}+27{q}^{4}+80{q}^{5}+207{q}^{6} + O(q^7).$
\end{tabular}
\ee

\item[$\bullet$] The general $M_{2k}(\Gamma_0(3))$ case
\ms

For $k \se 4$ we choose the first two elements of a $M_{2k}(\Gamma_0(3))$ unitary upper triangular basis as follows:
\be
\item[$\star$]
The element $E_{2k,3}^{(0)}$:

\begin{tabular}{lcl}
$E_{2k,3}^{(0)}$ & = & $[E_{2,3}^{(0)}]^k$.
\end{tabular}

\item[$\star$]
The element $E_{2k,3}^{(1)}$:

\begin{tabular}{lcl}
$E_{2k,3}^{(1)}$ & = & $[E_{2,3}^{(0)}]^{k-2}.E_{4,3}^{(1)}$
\end{tabular}
\ee
\bs

According to Theorem II-\ref{ThmStruct101}:
\begin{equation*}
\forall k \se 4, \ \ M_{2k}(\Gamma_0(3)) = Vect(E_{2k,3}^{(0)},E_{2k,3}^{(1)}) \oplus \Delta_3.M_{2k-6}(\Gamma_0(3))
\end{equation*}

and for $k\se 4$, we deduct a $M_{2k}(\Gamma_0(3))$ basis:

\begin{equation*}
{\cal B}_{2k}(\Gamma_0(3)) = \pa{[E_{2,3}^{(0)}]^a.\Delta_3^b, \ (a,b)\in\N^2 \ / \ a+3b = k} \cup \pa{E_{4,3}^{(1)}.[E_{2,3}^{(0)}]^a.\Delta_3^b, \ (a,b)\in\N^2 \ / \ a+3b = k-2}.
\end{equation*}

\ei
\bs
\ms

\subsection{-- Structure and bases of $(M_{2k}(\Gamma_0(4)))_{k\in \N^*}$}
\ms

The modular form that structures this set of spaces is $\Delta_4$ defined as follows:

\[\Delta_4(\tau) = \eta(4\tau)^{8} \eta(2\tau)^{-4} = -\frac{1}{16} {\twpa}(\frac{1}{2},2\tau) = q\prod_{k=1}^{+\infty} \frac{(1-{q}^{4n})^{8}}{(1-{q}^{2n})^{4}} \in M_{2}(\Gamma_0(4)).\]

\bs

This time $\Delta_4$ belongs to $M_{2}(\Gamma_0(4))$, which gives a very simple structure for the level $4$ spaces of which here is the first dimensions table.
\ms

\begin{center}
\begin{tabular}{ c|c|c|c|c|c|c|c|c}
$2k$   & 2  & 4  & 6  & 8  & 10  & 12  & 14  & 16 \\
\hline
$d_{2k}(4)$    & 2  & 3  & 4  & 5  & 6  & 7  & 8  & 9 \\
\end{tabular}
\end{center}

\bi
\item[$\bullet$] $M_{2}(\Gamma_0(4))$
\ms

There is a lot of choices for $E_{2,4}^{(0)}$: $E_{2,2}^{(0)}$, ${\twpa}(\tau,2\tau)$, ${\wpa}(\tau,4\tau)$, the Eisentein $E_4$ series and others\ldots

This time we choose ${\twpa}(\tau,2\tau)$ for consistency with $\Delta_4$. Of course, we must have $E_{2,4}^{(1)} = \Delta_4$.
\be
\item[$\star$]
The element $E_{2,4}^{(0)}$:

\begin{tabular}{lcl}
$E_{2,4}^{(0)}(\tau)$ & = & $\ds {\twpa}(\tau,2\tau)$\\
								& = & $\ds \prod_{k=1}^{+\infty} \frac{(1-{q}^{n})^{8}}{(1-{q}^{2n})^{4}}$\\
                & = & $1-8q+24q^2+32q^3+24q^4+O(q^5).$
\end{tabular}

\item[$\star$]
The element $E_{2,4}^{(1)}$:

\begin{tabular}{lcl}
$E_{2,4}^{(1)}(\tau)$ & = & $\ds -\frac{1}{16} {\twpa}(\frac{1}{2},2\tau)$\\
  & = & $\ds q\prod_{k=1}^{+\infty} \frac{(1-{q}^{4n})^{8}}{(1-{q}^{2n})^{4}}$\\
                & = & $\ds q+4q^3+6q^5+8q^7+13q^9+O(q^{11}).$
\end{tabular}
\ee

With these choices, we have a basis of two modular units for $M_{2}(\Gamma_0(4))$.
\ms

In application, let us verify the equality announced during the search for a $M_2(\Gamma_0(2))$ basis:
\[{\wpa}(\tau,2\tau) = {\twpa}(\tau,2\tau) - 2{\twpa}(\frac{1}{2},2\tau).\]

We know that $\ds E_{2,2}^{(0)}(\tau) = {\wpa}(\tau,2\tau) = 1+24q+O(q^3) \in M_{2}(\Gamma_0(2)) \subset M_{2}(\Gamma_0(4))$.

Family $({\twpa}(\tau,2\tau), -\frac{1}{16} {\twpa}(\frac{1}{2},2\tau))$ is a unitary upper triangular $M_{2}(\Gamma_0(4))$ basis, and therefore
\[{\wpa}(\tau,2\tau) = E_{2,4}^{(0)} + 32E_{2,4}^{(1)} = {\twpa}(\tau,2\tau) - 2{\twpa}(\frac{1}{2},2\tau)).\]
\ms

\item[$\bullet$] The general $M_{2k}(\Gamma_0(4))$ case
\ms

For $k \se 2$, we choose the first element of a $M_{2k}(\Gamma_0(4))$ unitary upper triangular basis as follows:

\be
\item[$\star$]
The element $E_{2k,4}^{(0)}$:

\begin{tabular}{lcl}
$E_{2k,4}^{(0)}$ & = & $[E_{2,4}^{(0)}]^k.$
\end{tabular}
\ee

We deduce:
\[\forall k \se 2, \ \ M_{2k}(\Gamma_0(4)) = Vect(E_{2k,4}^{(0)}) \oplus \Delta_4.M_{2k-2}(\Gamma_0(4)).\]

Then we get a $M_{2k}(\Gamma_0(4))$ basis:

\[{\cal B}_{2k}(\Gamma_0(4)) = \pa{[E_{2,4}^{(0)}]^a.\Delta_4^b, \ {\rm with} \ (a,b)\in\N^2 \ {\rm such \ that} \ a+b = k}.\]
\ei
\bs

The level $4$ and weight $2k$ modular forms are exactly the $k$ degree homogeneous polynomials in the variables $\twpa(\tau,2\tau)$ and ${\twpa}(\frac{1}{2},2\tau)$. Consequently, Eisenstein series are written in this form, as well as all modular forms according to $\Gamma_0(1)$. Also, since these two modular forms are $\eta$-products, we find that the level $1$ modular forms are linear combinations of $\eta$-products.
\ms

When we compare the early terms of the asymptotic expansions, we find representations other than $(\ref{Ei4})$:
\begin{equation*}
\begin{array}{lcl}
E_4(\tau) & = & \ds \twpa(\tau,2\tau)^2 + 16 \twpa(\frac{1}{2},2\tau)^2 - 16 \twpa(\tau,2\tau) \twpa(\frac{1}{2},2\tau)\\
 & = & \ds \twpa(\frac{\tau}{2},\tau)^2 + \twpa(\frac{1}{2},\tau)^2 - \twpa(\frac{\tau}{2},\tau)\twpa(\frac{1}{2},\tau).
\end{array}
\end{equation*}
The first equality was expected, but not the second which is symmetrical and refers to the period $\tau$ and not $2\tau$. 

Likewise:
\begin{equation*}
\begin{array}{lcl}
E_6(\tau) & = & \ds \twpa(\tau,2\tau)^3 + 30 \twpa(\tau,2\tau)^2 \twpa(\frac{1}{2},2\tau) - 96 \twpa(\tau,2\tau)\twpa(\frac{1}{2},2\tau)^2 +64\twpa(\frac{1}{2},2\tau)^3\\
& = & \ds \twpa(\frac{\tau}{2},\tau)^3 - \frac{3}{2} \twpa(\frac{\tau}{2},\tau)^2 \twpa(\frac{1}{2},\tau) - \frac{3}{2} \twpa(\frac{\tau}{2},\tau)\twpa(\frac{1}{2},\tau)^2 + \twpa(\frac{1}{2},\tau)^3.
\end{array}
\end{equation*}
\ms

For $E_8$, $E_{10}$ and $E_{12}$:
\begin{equation*}
\begin{array}{lcl}
E_8(\tau) & = & \ds \twpa(\tau,2\tau)^4 - 32\twpa(\tau,2\tau)^3\twpa(\frac{1}{2},2\tau) + 288 \twpa(\tau,2\tau)^2\twpa(\frac{1}{2},2\tau)^2 -512 \twpa(\tau,2\tau) \twpa(\frac{1}{2},2\tau)^3  + 256\twpa(\frac{1}{2},2\tau)^4\\
& = & \ds \twpa(\frac{\tau}{2},\tau)^4 - 2 \twpa(\frac{\tau}{2},\tau)^3 \twpa(\frac{1}{2},\tau) + 3 \twpa(\frac{\tau}{2},\tau)^2 \twpa(\frac{1}{2},\tau)^2 -2 \twpa(\frac{\tau}{2},\tau) \twpa(\frac{1}{2},\tau)^3 + \twpa(\frac{1}{2},\tau)^4.
\end{array}
\end{equation*}

\begin{equation*}
\begin{array}{lcl}
E_{10}(\tau) & = & \ds \twpa(\frac{\tau}{2},\tau)^5 - \frac{5}{2} \twpa(\frac{\tau}{2},\tau)^4 \twpa(\frac{1}{2},\tau) +  \twpa(\frac{\tau}{2},\tau)^3 \twpa(\frac{1}{2},\tau)^2 + \twpa(\frac{\tau}{2},\tau)^2 \twpa(\frac{1}{2},\tau)^3 \vspace{3mm}\\
& &  \hspace{7cm}  \ds - \frac{5}{2} \twpa(\frac{\tau}{2},\tau) \twpa(\frac{1}{2},\tau)^4 + \twpa(\frac{1}{2},\tau)^5.
\end{array}
\end{equation*}

\begin{equation*}
\begin{array}{lcl}
E_{12}(\tau) & = & \ds \twpa(\frac{\tau}{2},\tau)^6 - 3 \twpa(\frac{\tau}{2},\tau)^5 \twpa(\frac{1}{2},\tau) + \frac{4917}{1382} \twpa(\frac{\tau}{2},\tau)^4 \twpa(\frac{1}{2},\tau)^2 - \frac{1462}{691} \twpa(\frac{\tau}{2},\tau)^3 \twpa(\frac{1}{2},\tau)^3 +\vspace{3mm}\\

& & \ds \hspace{4.5cm} \frac{4917}{1382} \twpa(\frac{\tau}{2},\tau)^2 \twpa(\frac{1}{2},\tau)^4 -3 \twpa(\frac{\tau}{2},\tau) \twpa(\frac{1}{2},\tau)^5 + \twpa(\frac{1}{2},\tau)^6.
\end{array}
\end{equation*}

\bs
\ms

\subsection{-- Structure and bases of $(M_{2k}(\Gamma_0(5)))_{k\in \N^*}$}
\ms

When $N = 5$, the strong modular unit that structures the modular spaces is:

\[\Delta_5(\tau) = \eta(5\tau)^{10}\eta(\tau)^{-2} = \frac{1}{16} ({\wpa}(\tau,5\tau) - {\wpa}(2\tau,5\tau))^2 = {q}^{2}\prod _{n=1}^{+\infty}{\frac { \left( 1-{q}^{5n}
 \right) ^{10}}{ \left( 1-{q}^{n} \right) ^{2}}}\in M_{4}(\Gamma_0(5)).\]

\bs

This time $\Delta_5$ belongs to $M_{4}(\Gamma_0(5))$, so it is necessary to explain $M_{2}(\Gamma_0(5))$ and $M_{4}(\Gamma_0(5))$ bases.
\ms

The table of the dimensions of the first spaces:

\begin{center}
\begin{tabular}{ c|c|c|c|c|c|c|c|c}
$2k$   & 2  & 4  & 6  & 8  & 10  & 12  & 14  & 16 \\
\hline
$d_{2k}(5)$    & 1  & 3  & 3  & 5  & 5  & 7  & 7  & 9 \\
\end{tabular}
\end{center}

\bi
\item[$\bullet$] $M_{2}(\Gamma_0(5))$
\ms

We know the standard $M_{2}(\Gamma_0(5))$ generator.

\be
\item[$\star$]
The element $E_{2,5}^{(0)}$:

\begin{tabular}{lcl}
$E_{2,5}^{(0)}(\tau)$ & = & $\ds -\frac{3}{4} \sum_{k=1}^4 {\wpa}(k\tau,5\tau)$\\
                & = & $\ds -\frac{3}{2} ({\wpa}(\tau,5\tau) + {\wpa}(2\tau,5\tau))$\\
                & = & $1+6q+18q^2+24q^3+42q^4+O(q^5)$
\end{tabular}
\ee
\ms

\item[$\bullet$] $M_{4}(\Gamma_0(5))$
\ms

A somewhat systematic study once again finds, in addition to $\Delta_5$, two elements to form a $M_{4}(\Gamma_0(5))$ basis.
\ms

\be
\item[(i)]
$\ds ({\wpa}(\tau,5\tau) + {\wpa}(2\tau,5\tau)^2 = \frac{4}{9} (1+12q+72q^2+264q^3 + O(q^4))$

\item[(ii)]
$\ds \frac{1}{16}({\wpa}(\tau,5\tau) - {\wpa}(2\tau,5\tau))^2 \ds  =  \Delta_5(\tau) = q^2+2q^3+5q^4+10q^5+20q^6+26q^7+45q^8+O(q^{9})$

\item[(iii)]
Generically, the Eisenstein series $E_4(5\tau)$:

$E_4(q^5) = \ds 3 \pa{{\wpa}(\frac{1}{2},5\tau)^2 + {\wpa}(\frac{5\tau}{2},5\tau)^2 + {\wpa}(\frac{1}{2},5\tau) {\wpa}(\frac{5\tau}{2},5\tau)} = 1 +240q^5+2160q^{10} + O(q^{11})$
\ee

These three functions are linearly independent and a combination of $(i)$ and $(ii)$ results in a $0$ and $1$ unit scale family that completes $\Delta_5$.

\be
\item[$\star$]
The element $E_{4,5}^{(0)}$:

\begin{tabular}{lcl}
$E_{4,5}^{(0)}(\tau)$ & = & $[E_{2,5}^{(0)}(\tau)]^2$\\
                & = & $\ds \frac{9}{4} \pa{{\wpa}(\tau,5\tau) + {\wpa}(2\tau,5\tau)}^2$\\
                & = & $1+12q+72{q}^{2}+264{q}^{3}+696{q}^{4}+O(q^{5}).
$
\end{tabular}
\ms

\item[$\star$]
The element $E_{4,5}^{(1)}$:

\begin{tabular}{lcl}
$E_{4,5}^{(1)}(\tau)$ & = & $\ds \frac{1}{48} \pa{9 \pa{{\wpa}(\tau,5\tau) + {\wpa}(2\tau,5\tau)}^2 -12 \pa{{\wpa}(\frac{1}{2},5\tau)^2 + {\wpa}(\frac{5\tau}{2},5\tau)^2 + {\wpa}(\frac{1}{2},5\tau) {\wpa}(\frac{5\tau}{2},5\tau)}}$\\
                & = & $q+6q^2+22q^3+58q^4+O(q^5)$
\end{tabular}
\ms

\item[$\star$]
The element $E_{4,5}^{(2)}$:

\begin{tabular}{lcl}
$E_{4,5}^{(2)}(\tau)$ & = & $\Delta_5(\tau)$\\
 & = & $\ds \frac{1}{16} ({\wpa}(\tau,5\tau) - {\wpa}(2\tau,5\tau))^2$\\
 & = & $\ds {q}^{2}\prod _{n=1}^{+\infty}{\frac { \left( 1-{q}^{5n}
 \right) ^{10}}{ \left( 1-{q}^{n} \right) ^{2}}}$\\
 & = & $\ds  q^2+2q^3+5q^4+O(q^{10})$\\
\end{tabular}
\ee
\ms

\item[$\bullet$] The general $M_{2k}(\Gamma_0(5))$ case
\ms

For $k \se 3$, you can choose the first two elements of a $M_{2k}(\Gamma_0(5))$ unitary upper triangular basis as follows:

\be
\item[$\star$]
The element $E_{2k,5}^{(0)}$:

\begin{tabular}{lcl}
$E_{2k,5}^{(0)}$ & = & $[E_{2,5}^{(0)}]^k$
\end{tabular}

\item[$\star$]
The element $E_{2k,5}^{(1)}$:

\begin{tabular}{lcl}
$E_{2k,5}^{(1)}$ & = & $[E_{2,5}^{(0)}]^{k-2}.E_{4,5}^{(1)}$
\end{tabular}
\ee

From $\Delta_5$ properties, we deduce by factorization arguments already developed:

\[\forall k \se 3, \ \ M_{2k}(\Gamma_0(5)) = Vect(E_{2k,5}^{(0)},E_{2k,5}^{(1)}) \oplus \Delta_5.M_{2k-4}(\Gamma_0(5))\]

to get a $M_{2k}(\Gamma_0(5))$ basis:

\[{\cal B}_{2k}(\Gamma_0(5)) = \pa{[E_{2,5}^{(0)}]^a.\Delta_5^b, \ (a,b)\in\N^2 \ / \ a+2b = k} \cup \pa{E_{4,5}^{(1)}.[E_{2,5}^{(0)}]^a.\Delta_5^b, \ (a,b)\in\N^2 \ / \ a+2b = k-2} .\]
\ei
\bs
\ms

\subsection{-- Structure and bases of $(M_{2k}(\Gamma_0(6)))_{k\in \N^*}$}
\ms


We know that
\[\Delta_6(\tau) = \frac{\eta(\tau)^2 \eta(6\tau)^{12}}{\eta(2\tau)^4 \eta(3\tau)^6} = q^2\prod_{k=1}^{+\infty} \frac{(1-q^{k})^{2}(1-q^{6k})^{12}} {(1-q^{2k})^{4} (1-q^{3k})^{6}} \in M_{2}(\Gamma_0(6)).\]
\ms

The dimensions of the first spaces $(M_{2k}(\Gamma_0(6)))_{k\in \N^*}$ are shown in the following table:
\begin{center}
\begin{tabular}{ c|c|c|c|c|c|c|c|c}
$2k$   & 2  & 4  & 6  & 8  & 10  & 12  & 14  & 16 \\
\hline
$d_{2k}(6)$   & 3  & 5  & 7  & 9  &11  & 13  & 15  & 17 \\
\end{tabular}
\end{center}
\bs

\bi
\item[$\bullet$] $M_{2}(\Gamma_0(6))$
\ms

A now classic search for elements of $M_{2}(\Gamma_0(6))$ gives the forms:
\be
\item[(i)]
$\ds \sum_{k=1}^5 {\wpa}(k\tau,6\tau) = -\frac{1}{3}(5+24q+72q^2+96q^3+168q^4+O(q^5))$


\item[(ii)]

$\ds {\wpa}(\tau,2\tau) = -\frac{1}{3}(1+24q+24q^2+96q^3+24q^4+O(q^5))$

\item[(iii)]
$\ds {\wpa}(\tau,3\tau) = -\frac{1}{3}(1+12q+36q^2+12q^3+84q^4+O(q^5))$
\ee

These three modular forms are linearly independent, forming a $M_{2}(\Gamma_0(6))$ basis which can be reduced.
\be
\item[$\star$]
The element $E_{2,6}^{(0)}$:

\begin{tabular}{lcl}
$E_{2,6}^{(0)}(\tau)$ & = & $\ds -3{\wpa}(\tau,2\tau)$\\
                & = & $1+24q+24q^2+96q^3+24q^4+O(q^5)$
\end{tabular}

\item[$\star$]
The element $E_{2,6}^{(1)}$:

\begin{tabular}{lcl}
$E_{2,6}^{(1)}(\tau)$ & = & $\ds -\frac{1}{4}\pa{{\wpa}(\tau,2\tau) - {\wpa}(\tau,3\tau)}$\\
& = & $\ds q-q^2+7q^3-5q^4+ O(q^5)$
\end{tabular}

\item[$\star$]
The element $E_{2,6}^{(2)}$:

\begin{tabular}{lcl}
$E_{2,6}^{(2)}(\tau)$ & = & $\Delta_6(\tau)$\\
                &  = & $\ds \frac{1}{48} \pa{3\wpa(\tau,2\tau) - 8\wpa(\tau,3\tau) + \sum_{k=1}^5 {\wpa}(k\tau,6\tau)}$\\
                & = & $q^2-2q^3+3q^4+O(q^{5})$
\end{tabular}
\ee
\ms

\item[$\bullet$] The general $M_{2k}(\Gamma_0(6))$ case
\ms

The procedure is similar. For $k \se 2$, you can choose the first two elements of a $M_{2k}(\Gamma_0(6))$ unitary upper triangular basis as follows:

\be
\item[$\star$]
The element $E_{2k,6}^{(0)}$:

\begin{tabular}{lcl}
$E_{2k,6}^{(0)}(\tau)$ & = & $[E_{2,6}^{(0)}(\tau)]^k$
\end{tabular}

\item[$\star$]
The element $E_{2k,6}^{(1)}$:

\begin{tabular}{lcl}
$E_{2k,6}^{(1)}(\tau)$ & = & $[E_{2,6}^{(1)}(\tau)][E_{2,6}^{(0)}(\tau)]^{k-1}$
\end{tabular}
\ee

And as a result:
\[\forall k \se 2, \ \ M_{2k}(\Gamma_0(6)) = Vect(E_{2k,6}^{(0)},E_{2k,6}^{(1)}) \oplus \Delta_6.M_{2k-2}(\Gamma_0(4)).\]
Then we get a $M_{2k}(\Gamma_0(6))$ basis:

\[{\cal B}_{2k}(\Gamma_0(6)) = \pa{[E_{2,6}^{(0)}]^a.\Delta_6^b, \ (a,b)\in\N^2 \ / \ a+b = k} \cup \pa{E_{2,6}^{(1)}.[E_{2,6}^{(0)}]^a.\Delta_6^b, \ (a,b)\in\N^2 \ / \ a+b = k-1} .\]
\ei
\bs
\ms

\subsection{-- Structure and bases of $(M_{2k}(\Gamma_0(7)))_{k\in \N^*}$}
\ms

The strong modular unit of level $7$ is:
\[\Delta_7(\tau) = \eta(\tau)^{-2}\eta(7\tau)^{14} = q^4 \prod_{n=1}^{+\infty} \frac{(1-q^{7n})^{14}}{(1-q^n)^2} \in M_6(\Gamma_0(7)).\]

\ms

The dimensions of the first spaces $(M_{2k}(\Gamma_0(7)))_{k\in \N^*}$ are given in the following table:

\begin{center}
\begin{tabular}{ c|c|c|c|c|c|c|c|c}
$2k$   & 2  & 4  & 6  & 8  & 10  & 12  & 14  & 16 \\
\hline
$d_{2k}(7)$   & 1  & 3  & 5  & 5  & 7  & 9  & 9  & 11 \\
\end{tabular}
\end{center}
\bs

\bi
\item[$\bullet$] $M_{2}(\Gamma_0(7))$
\ms

A generator of $M_{2}(\Gamma_0(7))$ is given by the generic formula:
\be
\item[$\star$]
The element $E_{2,7}^{(0)}$:

\begin{tabular}{lcl}
$E_{2,7}^{(0)}(\tau)$ & = & $\ds - \sum_{k=1}^3 {\wpa}(k\tau,7\tau)$\\
                & = & $1+4q+12{q}^{2}+16{q}^{3}+28{q}^{4}+O (q^{5})$
\end{tabular}
\ee
\ms

\item[$\bullet$] $M_{4}(\Gamma_0(7))$

A search gives some elements of $M_{4}(\Gamma_0(7))$:

\be
\item[(i)]
$\ds \pa{\sum_{k=1}^3 {\wpa}(k\tau,7\tau)}^2 = 1+8q+40{q}^{2}+128{q}^{3}+328{q}^{4}+O ({q}^{5})$

\item[(ii)]
$\ds 3\sum_{k=1}^3 {\wpa}(k\tau,7\tau)^2 = 1+8q+72q^2+224q^3+584q^4+O(q^5)$

\item[(iii)]
$E_4(q^7) = \ds 3 \pa{{\wpa}(\frac{1}{2},7\tau)^2 + {\wpa}(\frac{7\tau}{2},7\tau)^2 + {\wpa}(\frac{1}{2},7\tau) {\wpa}(\frac{7\tau}{2},7\tau)} = 1 +240q^7+2160q^{14} + O(q^{21})$
\ee
\ms

A unitary upper triangular basis is obtained by the linear combinations $(i)$, $\frac{1}{8} ((i) - (iii))$, $\frac{1}{32}((ii)-(i))$

\be
\item[$\star$]
The element $E_{4,7}^{(0)}$:

\begin{tabular}{lcl}
$E_{4,7}^{(0)}(\tau)$ & = & $\ds [E_{2,7}^{(0)}(\tau)]^2$\\
                & = &  $\ds \pa{\sum_{k=1}^3 {\wpa}(k\tau,7\tau)}^2$\\
                & = & $ 1+8q+40{q}^{2}+128{q}^{3}+328{q}^{4}+656{q}^{5}+1216{q}^{
6}+1864{q}^{7}+O ({q}^{8})$
\end{tabular}

\item[$\star$]
The element $E_{4,7}^{(1)}$:

\begin{tabular}{lcl}
$E_{4,7}^{(1)}(\tau)$ & = & $\ds \frac{1}{8} \pa{\pa{\sum_{k=1}^3 {\wpa}(k\tau,7\tau)}^2 - 3 \pa{{\wpa}(\frac{1}{2},7\tau)^2 + {\wpa}(\frac{7\tau}{2},7\tau)^2 + {\wpa}(\frac{1}{2},7\tau) {\wpa}(\frac{7\tau}{2},7\tau)}}$\\
                & = & $ q+5q^2+16q^3+41q^4+82q^5+152q^6+203q^7+357q^8+O(q^{9})$
\end{tabular}

\item[$\star$]
The element $E_{4,7}^{(2)}$:

\begin{tabular}{lcl}
$E_{4,7}^{(2)}(\tau)$ & = & $\ds \frac{1}{32} \pa{3\sum_{k=1}^3 {\wpa}(k\tau,7\tau)^2 - \pa{\sum_{k=1}^3 {\wpa}(k\tau,7\tau)}^2}$\\
                & = & $ q^2 +3q^3+8q^4+11q^5+25q^6+35q^7+57q^8+O(q^9) $
\end{tabular}
\ee
\ms

\item[$\bullet$] $M_{6}(\Gamma_0(7))$
\ms

There is a begining for a unitary upper triangular basic: $([E_{2,7}^{(0)}]^3, E_{2,7}^{(0)} E_{4,7}^{(1)}, E_{2,7}^{(0)} E_{4,7}^{(2)})$. The function $\Delta_7$ is valuation $4$, there is a $3$ valuation item missing. To get it we can notice that $E_6$ is in $M_{6}(\Gamma_0(7))$ (just like $E_2(3\tau)$ by the way) and work by linear combinations.
\ms

We will rather introduce here a generic process outlined in $N = 4$ case: the homogeneous symmetrical polynomials of degree $3$ in $({\wpa}(k\tau,7\tau))_{1\ie k\ie 6}$, and by symmetry those of $({\wpa}(k\tau,7\tau))_{1\ie k\ie 3}$, are elements of $M_{6}(\Gamma_0(7))$.
\ms

\be
\item[(i)]
$\ds H_1(\tau) = 9 \pa{{\wpa}(\tau,7\tau)^3 + {\wpa}(2\tau,7\tau)^3 + {\wpa}(3\tau,7\tau)^3} = - (1+12q+180q^2+1200q^3+5124q^4+O(q^5))$

\item[(ii)]
$\ds H_2(\tau) = \frac{9}{2} \big({\wpa}(\tau,7\tau)^2{\wpa}(2\tau,7\tau) +{\wpa}(\tau,7\tau)^2{\wpa}(3\tau,7\tau) + {\wpa}(2\tau,7\tau)^2{\wpa}(\tau,7\tau) +{\wpa}(2\tau,7\tau)^2{\wpa}(3\tau,7\tau)$

{\hskip 1.0cm} $ +{\wpa}(3\tau,7\tau)^2{\wpa}(\tau,7\tau) +{\wpa}(3\tau,7\tau)^2{\wpa}(2\tau,7\tau)\big) = - (1+12q+84q^2+336q^3+1188q^4+O(q^5))$

\item[(iii)]
$\ds H_3(\tau) = 27 {\wpa}(\tau,7\tau){\wpa}(2\tau,7\tau){\wpa}(3\tau,7\tau) = - (1+12q+36q^2+192q^3+516q^4+O(q^5))$
\ee
\ms

It can be seen that

$\begin{array}{lcl}
\ds \frac{1}{576}(F_1(\tau)-3F_2(\tau)+2F_3(\tau)) & = & \ds -\frac{1}{128}[2{\wpa}(\tau,7\tau) - {\wpa}(2\tau,7\tau) - {\wpa}(3\tau,7\tau)]\times\\
&  & [2{\wpa}(2\tau,7\tau) - {\wpa}(\tau,7\tau) - {\wpa}(3\tau,7\tau)][2{\wpa}(3\tau,7\tau) - {\wpa}(\tau,7\tau) - {\wpa}(2\tau,7\tau)]\\
& = & q^3 + \frac{9}{2} q^4 + 12q^5 + O(q^6)
\end{array}$
which is our candidate for $E_{6,7}^{(3)}$.\ms

We can now describe a $M_{6}(\Gamma_0(7))$ unitary upper triangular basis.
\ms

\be
\item[$\star$]
The element $E_{6,7}^{(0)}$:

\begin{tabular}{lcl}
$E_{6,7}^{(0)}(\tau)$ & = & $\ds [E_{2,7}^{(0)}(\tau)]^3$\\
                & = & $1+12q+84q^2+400q^3+1476q^4+O(q^5)$
\end{tabular}
\ms

\item[$\star$]
The element $E_{6,7}^{(1)}$:

\begin{tabular}{lcl}
$E_{6,7}^{(1)}(\tau)$ & = & $\ds E_{2,7}^{(0)}(\tau)E_{4,7}^{(1)}(\tau)$\\
                & = & $q+9q^2+48q^3+181q^4+O(q^5)$
\end{tabular}
\ms

\item[$\star$]
The element $E_{6,7}^{(2)}$:

\begin{tabular}{lcl}
$E_{6,7}^{(2)}(\tau)$ & = & $\ds E_{2,7}^{(0)}(\tau)E_{4,7}^{(2)}(\tau)$\\
                & = & $q^2+7q^3+32q^4+O(q^5)$
\end{tabular}
\ms

\item[$\star$]
The element $E_{6,7}^{(3)}$:

\begin{tabular}{lcl}
$E_{6,7}^{(3)}(\tau)$ & = & $\ds \frac{1}{576} (H_1(\tau)-3H_2(\tau)+2H_3(\tau))$\\
                & = & $\ds q^3 + \frac{9}{2} q^4 + 12q^5 + O(q^6)$
\end{tabular}
\ms

\item[$\star$]
The element $E_{6,7}^{(4)}$:

\begin{tabular}{lcl}
$E_{6,7}^{(4)}(\tau)$ & = & $\ds \Delta_7(\tau)$\\
								& = & $\ds q^4 \prod_{n=1}^{+\infty} \frac{(1-q^{7n})^{14}}{(1-q^n)^2}$\\
                & = & $q^4+2q^5+5q^6+10q^7+O(q^{8})$
\end{tabular}
\ee

\item[$\bullet$] The general $M_{2k}(\Gamma_0(7))$ case
\ms

Based on structure Theorem II-\ref{ThmStruct101}:
\begin{equation}
\forall k \se 4, \ \ M_{2k}(\Gamma_0(7)) = Vect(E_{2k,7}^{(0)},E_{2k,7}^{(1)},E_{2k,7}^{(2)},E_{2k,7}^{(3)}) \oplus \Delta_7.M_{2k-6}(\Gamma_0(7)).\label{eqX}
\end{equation}


For $k \se 4$, we choose the first four elements of a $M_{2k}(\Gamma_0(7))$ unitary upper triangular basis as follows:

\be
\item[$\star$]
The element $E_{2k,7}^{(0)}$:

$E_{2k,7}^{(0)} = [E_{6,7}^{(0)}][E_{2,7}^{(0)}]^{k-3} = [E_{2,7}^{(0)}]^k$

\item[$\star$]
The element $E_{2k,7}^{(1)}$:

$E_{2k,7}^{(1)} = [E_{6,7}^{(1)}][E_{2,7}^{(0)}]^{k-3}$

\item[$\star$]
The element $E_{2k,7}^{(2)}$:

$E_{2k,7}^{(2)} = [E_{6,7}^{(2)}][E_{2,7}^{(0)}]^{k-3}$

\item[$\star$]
The element $E_{2k,7}^{(3)}$:

$E_{2k,7}^{(3)} = [E_{6,7}^{(3)}][E_{2,7}^{(0)}]^{k-3}$\ee
\ms

Equality $(\ref{eqX})$ enables one to recursively obtain a $M_{2k}(\Gamma_0(7))$ basis. For a literal description, we set
${\cal B}_2(\Gamma_0(7)) = (E_{2,7}^{(0)})$, ${\cal B}_4(\Gamma_0(7)) = (E_{4,7}^{(0)},E_{4,7}^{(1)},E_{4,7}^{(2)})$, ${\cal B}_6(\Gamma_0(7)) = (E_{6,7}^{(0)},E_{6,7}^{(1)},E_{6,7}^{(2)},E_{6,7}^{(3)})$.
\ms

For $k\se 4$, with $k = 3q + r$, $r\in \{1,2,3\}$:

\[{\cal B}_{2k}(\Gamma_0(7)) =  \Delta_7^q {\cal B}_{2r}(\Gamma_0(7)) \ \cup \ \bigcup_{r=0}^3 \pa{E_{6,7}^{(r)}.[E_{2,7}^{(0)}]^a.\Delta_7^b, \ (a,b)\in\N^2 \ / \ a+3b = k-3}.\]
\ei
\bs
\ms

\subsection{-- Structure and bases of $(M_{2k}(\Gamma_0(8)))_{k\in \N^*}$}
\ms

The dimensions of the first spaces $(M_{2k}(\Gamma_0(8)))_{k\in \N^*}$ are as follows:

\begin{center}
\begin{tabular}{ c|c|c|c|c|c|c|c|c}
$2k$   & 2  & 4  & 6  & 8  & 10  & 12  & 14  & 16 \\
\hline
$d_{2k}(8)$   & 3  & 5  & 7  & 9  & 11  & 13  & 15  & 17 \\
\end{tabular}
\end{center}

The structure here is very simple since $\Delta_8(\tau) = \Delta_4(2\tau)\in M_{2}(\Gamma_0(8))$.
\ms

\bi
\item[$\bullet$] $M_{2}(\Gamma_0(8))$
\ms

As in $N = 4$ case, it is possible to find a $M_{2}(\Gamma_0(8))$ basis consisting of modular units (strong for $\Delta_8$).
\ms

\be
\item[$\star$]
The element $E_{2,8}^{(0)}$:

\begin{tabular}{lcl}
$E_{2,8}^{(0)}(\tau)$ & = &  $\ds {\twpa}(\tau,2\tau)$\\
& = & $\ds \eta(\tau)^8\eta(2\tau)^{-4} = \prod_{k=1}^{+\infty} \frac{(1-{q}^{n})^{8}}{(1-{q}^{2n})^{4}}$\\
& = & $1-8q+24q^2-32q^3+24q^4+O(q^5)$.
\end{tabular}
\ms

\item[$\star$]
The element $E_{2,8}^{(1)}$:

\begin{tabular}{lcl}
$E_{2,8}^{(1)}(\tau)$ & = & $\ds -\frac{1}{16} {\twpa}(\frac{1}{2},2\tau)$\\
								& = & $\ds \Delta_4(\tau) = \eta(2\tau)^{-4} \eta(4\tau)^{8} = q\prod_{k=1}^{+\infty} \frac{(1-{q}^{4n})^{8}}{(1-{q}^{2n})^{4}}$\\
                & = & $\ds q+4q^3+6q^5+O(q^{6})$.
\end{tabular}

Note that $\Delta_4$ is not a strong modular unit of $(M_{2k}(\Gamma_0(8)))_{k\in \N^*}$. The function does not cancel in any non-infinite cusp according to $\Gamma_0(4)$ but such a cusp exists according to $\Gamma_0(8)$ !
\ms

\item[$\star$]
The element $E_{2,8}^{(2)}$:

\begin{tabular}{lcl}
$E_{2,8}^{(2)}(\tau)$ & = & $\ds -\frac{1}{16} {\twpa}(\frac{1}{2},4\tau)$\\
                & = & $ \ds \Delta_8(\tau) = \eta(4\tau)^{-4} \eta(8\tau)^{8} = q^2\prod_{k=1}^{+\infty} \frac{(1-{q}^{8n})^{8}}{(1-{q}^{4n})^{4}}$\\
                & = & $q^2+4q^6+O(q^{10})$.
\end{tabular}
\ee
\ms

\item[$\bullet$] The general $M_{2k}(\Gamma_0(8))$ case
\ms

We have equality:
\[\forall k \se 2, \ \ M_{2k}(\Gamma_0(8)) = Vect(E_{2k,8}^{(0)},E_{2k,8}^{(1)}) \oplus \Delta_8.M_{2k-2}(\Gamma_0(8)).\]

For $k \se 2$, we choose the first two elements of a $M_{2k}(\Gamma_0(8))$ unitary upper triangular basis as follows:

\be
\item[$\star$]
The element $E_{2k,8}^{(0)}$:

$\begin{array}{lcl}
E_{2k,8}^{(0)} & = & [E_{2,8}^{(0)}]^k
\end{array}$

\item[$\star$]
The element $E_{2k,8}^{(1)}$:

$\begin{array}{lcl}
E_{2k,8}^{(1)} & = & E_{2,8}^{(1)}[E_{2,8}^{(0)}]^{k-1}
\end{array}$
\ee
\ms

We finally get a $M_{2k}(\Gamma_0(8))$ basis:

\[{\cal B}_{2k}(\Gamma_0(8)) = \pa{[E_{2,8}^{(0)}]^a.\Delta_8^b, \ (a,b)\in\N^2 \ / \ a+b = k} \cup \pa{E_{2,8}^{(1)}.[E_{2,8}^{(0)}]^a.\Delta_8^b, \ (a,b)\in\N^2 \ / \ a+b = k-1}\]
\ei
\bs
\ms

\subsection{-- Structure and bases of $(M_{2k}(\Gamma_0(9)))_{k\in \N^*}$}
\ms

The modular form that structures this set of spaces is $\Delta_9$ defined by:

\[\Delta_9(\tau) = \eta(9\tau)^{6} \eta(3\tau)^{-2} = {q}^{2}\prod_{k=1}^{+\infty} \frac{(1-{q}^{9k})^{6}}{(1-{q}^{3k})^{2}} \in M_{2}(\Gamma_0(9)).\]

\bs

Let us remind the first values of $d_{2k}(9)$:

\begin{center}
\begin{tabular}{ c|c|c|c|c|c|c|c|c}
$2k$   & 2  & 4  & 6  & 8  & 10  & 12  & 14  & 16 \\
\hline
$d_{2k}(9)$ & 3  & 5  & 7  & 9  & 11   &  13  & 15   & 17  \\
\end{tabular}
\end{center}

\bi
\item[$\bullet$] $M_{2}(\Gamma_0(9))$
\ms

With the help of experience, we find the following elements:
\ms

\be
\item[(i)]
\begin{tabular}{lcl}
$\ds {\wpa}(\tau,3\tau)$ & = & $\ds -\frac{1}{3}(1+12q+36q^2+12q^3+84q^4+O(q^{5}))$\end{tabular}

\item[(ii)]
\begin{tabular}{lcl}
$\ds {\wpa}(3\tau,9\tau)$ & = & $\ds -\frac{1}{3}(1+12q^3+36q^6+O(q^{9}))$
\end{tabular}

\item[(iii)]
\begin{tabular}{lcl}
$\ds \sum_{k=1}^4 {\wpa}(k\tau,9\tau)$ & = & $\ds -\frac{4}{3}  (1+3q+9q^2+12q^3+21q^4+ O(q^{5})$
\end{tabular}

\item[(iv)]
\begin{tabular}{lcl}
$\ds \Delta_9(\tau)$ & = & $q^2+2q^5+5q^8 + O(q^{11})$
\end{tabular}
\ee
\ms

Note that the first three functions are bound in $M_{2}(\Gamma_0(9))$: $(i)+3(ii)-(iii) = 0$.
\ms

We obtain a unitary upper triangular basis as follows:
\ms

\be
\item[$\star$]
The element $E_{2,9}^{(0)}$:

\begin{tabular}{lcl}
$E_{2,9}^{(0)}(\tau)$ & = & $\ds -3 {\wpa}(3\tau,9\tau)$\\
                & = & $\ds 1+12q^3+36q^6+O(q^{9})$
\end{tabular}
\ms

\item[$\star$]
The element $E_{2,9}^{(1)}$:

\begin{tabular}{lcl}
$E_{2,9}^{(1)}(\tau)$ & = & $-\ds \frac{1}{4} \pa{{\wpa}(\tau,3\tau)-{\wpa}(3\tau,9\tau)}$\\
                & = & $\ds q+3q^2+7q^4+6q^5+O(q^{7}))$
\end{tabular}
\ms

\item[$\star$]
The element $E_{2,9}^{(2)}$:

\begin{tabular}{lcl}
$E_{2,9}^{(2)}(\tau)$ & = & $\Delta_9(\tau) = \ds \eta(9\tau)^{6} \eta(3\tau)^{-2}$\\
  & $=$ & $\ds {q}^{2}\prod_{k=1}^{+\infty}{\frac { \left( 1-{q}^{9k} \right) ^{6}}{ \left( 1-{q}^{3k} \right) ^{2}}}$\\
  & $=$ & $\ds q^2+2q^5+5q^8 + O(q^{11})$
\end{tabular}
\ee
\ms

\item[$\bullet$] The general $M_{2k}(\Gamma_0(9))$ case
\ms

The structure is isomorphic to $M_{2k}(\Gamma_0(8))$.
We have equality:
\[\forall k \se 2, \ \ M_{2k}(\Gamma_0(9)) = Vect(E_{2k,9}^{(0)},E_{2k,9}^{(1)}) \oplus \Delta_9.M_{2k-2}(\Gamma_0(9)).\]

For $k \se 2$, we choose the first two elements of a $M_{2k}(\Gamma_0(9))$ unitary upper triangular basis as follows:

\be
\item[$\star$]
The element $E_{2k,9}^{(0)}$:

$\begin{array}{lcl}
E_{2k,9}^{(0)} & = & [E_{2,9}^{(0)}]^k
\end{array}$

\item[$\star$]
The element $E_{2k,9}^{(1)}$:

$\begin{array}{lcl}
E_{2k,9}^{(1)} & = & E_{2,9}^{(1)}[E_{2,9}^{(0)}]^{k-1}
\end{array}$
\ee
\ms

Then we get a $M_{2k}(\Gamma_0(9))$ basis:

\[{\cal B}_{2k}(\Gamma_0(9)) = \pa{[E_{2,9}^{(0)}]^a.\Delta_9^b, \ (a,b)\in\N^2 \ / \ a+b = k} \cup \pa{E_{2,9}^{(1)}.[E_{2,9}^{(0)}]^a.\Delta_8^b, \ (a,b)\in\N^2 \ / \ a+b = k-1}\]
\ei
\bs
\ms

\subsection{-- Structure and bases of $(M_{2k}(\Gamma_0(10)))_{k\in \N^*}$}
\ms

The modular form that structures this set of spaces is $\Delta_{10}$ defined by:

\[\Delta_{10}(\tau) = \eta(\tau)^2 \eta(2\tau)^{-4} \eta(5\tau)^{-10} \eta(10\tau)^{20} = {q}^{6} \prod_{k=0}^{+\infty} \frac{( 1-q^{n})^2 (1-q^{10n})^{20}}{( 1-q^{2n})^4 ( 1-q^{5n})^{10}} \in M_{4}(\Gamma_0(10)).\]

\bs

Let us remind the first values of $d_{2k}(10)$:

\begin{center}
\begin{tabular}{ c|c|c|c|c|c|c|c|c}
$2k$   & 2  & 4  & 6  & 8  & 10  & 12  & 14  & 16 \\
\hline
$d_{2k}(10)$ & 3  & 7  & 9  & 13  & 15   &  19  & 21   & 25  \\
\end{tabular}
\end{center}

\bi
\item[$\bullet$] $M_{2}(\Gamma_0(10))$
\ms

From $M_{2}(\Gamma_0(2))$ and $M_{2}(\Gamma_0(5))$ bases, we find the following elements:
\ms

\be
\item[(i)]
\begin{tabular}{lcl}
$\ds {\wpa}(\tau,2\tau) $ & =  $\ds -\frac{1}{3} (1+24q+24q^2+96q^3+24q^4+144q^5+O(q^{6})$
\end{tabular}

\item[(ii)]
\begin{tabular}{lcl}
$\ds {\wpa}(5\tau,10\tau)$ & = & $\ds -\frac{1}{3}(1+24q^5+24q^{10}+O(q^{15}))$
\end{tabular}

\item[(iii)]
\begin{tabular}{lcl}
$\ds {\wpa}(\tau,5\tau) - {\wpa}(2\tau,5\tau)$ & = & $\ds -\frac{2}{3}  (1+6q+18q^2+24q^3+42q^4+6q^5+O(q^{6}))$
\end{tabular}

\item[(iv)]
\begin{tabular}{lcl}
$\ds \sum_{k=1}^4 {\wpa}(k\tau,10\tau)$ & = & $\ds -\frac{4}{3}  (1+3q+9q^2+12q^3+21q^4+15q^5+O(q^6))$
\end{tabular}
\ee
\ms

These functions are linked by the relationship $2(ii)+(iii)-(iv) = $0.

The following unitary upper triangular basis is derived.
\ms

\be
\item[$\star$]
The element $E_{2,10}^{(0)}$:

\begin{tabular}{lcl}
$E_{2,10}^{(0)}(\tau)$ & = & $-\ds 3 {\wpa}(5\tau,10\tau)$\\
                & = & $\ds 1+24q^5+24q^{10}+O(q^{15})$
\end{tabular}
\ms

\item[$\star$]
The element $E_{2,10}^{(1)}$:

\begin{tabular}{lcl}
$E_{2,10}^{(1)}(\tau)$ & = & $-\ds \frac{1}{8}\pa{{\wpa}(\tau,2\tau)-{\wpa}(5\tau,10\tau)}$\\
                 & = & $\ds q+q^2+4q^3+q^4+5q^5+4q^6+8q^7 + O(q^{8})$
\end{tabular}
\ms

\item[$\star$]
The element $E_{2,10}^{(2)}$:

\begin{tabular}{lcl}
$E_{2,10}^{(2)}(\tau)$ & = & $\ds \frac{1}{16}\pa{{\wpa}(\tau,2\tau)-2{\wpa}(\tau,5\tau)-2{\wpa}(2\tau,5\tau)+3{\wpa}(5\tau,10\tau)}$\\
                 & = & $\ds q^2+3q^4-4q^5+4q^6+O(q^{8})$
\end{tabular}
\ee
\ms

\item[$\bullet$] $M_{4}(\Gamma_0(10))$
\ms

The form $\Delta_{10}$ and the products of two elements of $M_{2}(\Gamma_0(10))$ provide elements for a $M_{4}(\Gamma_0(10))$ unitary upper triangular basis. Just missing a $5$ valuation element to complete the basis.

We know that the Eisenstein series $E_4(5\tau)$ is in $M_{4}(\Gamma_0(10))$, but it is a linear combination of the elements $(E_{4,10}^{(k)})_{0\ie k \ie 4}$ and therefore does not bring the element $E_{4,10}^{(5)}$ sought.

We can also think of $\Delta_5(2\tau)\in M_{4}(\Gamma_0(10))$. It is a $4$ valuation term but it is also a linear combination of $(E_{4,10}^{(k)})_{0\ie k \ie 4}$.
\ms

Finally, the last "natural" candidate who completes $\Delta_2(5\tau)$ basis, necessarily in $M_{4}(\Gamma_0(10))$ with valuation $5$.
\bs
\bs

\be
\item[$\star$]
The element $E_{4,10}^{(0)}$:

\begin{tabular}{lcl}
$E_{4,10}^{(0)}(\tau)$ & = & $[E_{2,10}^{(0)}(\tau)]^2 $\\
  & $=$ & $\ds 1+48\,{q}^{5}+624\,{q}^{10} + O(q^{15})$
\end{tabular}

\item[$\star$]
The element $E_{4,10}^{(1)}$:

\begin{tabular}{lcl}
$E_{4,10}^{(1)}(\tau)$ & = & $E_{2,10}^{(0)}(\tau) E_{2,10}^{(1)}(\tau) $\\
  & $=$ & $\ds q+{q}^{2}+4\,{q}^{3}+{q}^{4}+5\,{q}^{5}+28\,{q}^{6}+32\,{q}^{7} + O(q^{8})$
\end{tabular}

\item[$\star$]
The element $E_{4,10}^{(2)}$:

\begin{tabular}{lcl}
$E_{4,10}^{(2)}(\tau)$ & = & $ E_{2,10}^{(0)}(\tau)E_{2,10}^{(2)}(\tau) $\\
  & $=$ & $\ds {q}^{2}+3\,{q}^{4}-4\,{q}^{5}+4\,{q}^{6}+24\,{q}^{7}+ O(q^{8})$
\end{tabular}

\item[$\star$]
The element $E_{4,10}^{(3)}$:

\begin{tabular}{lcl}
$E_{4,10}^{(3)}(\tau)$ & = & $ E_{2,10}^{(1)}(\tau)E_{2,10}^{(2)}(\tau) $\\
  & $=$ & $\ds {q}^{3}+{q}^{4}+7\,{q}^{5}+17\,{q}^{7}+ O(q^{8})$
\end{tabular}

\item[$\star$]
The element $E_{4,10}^{(4)}$:

\begin{tabular}{lcl}
$E_{4,10}^{(4)}(\tau)$ & = & $ [E_{2,10}^{(2)}(\tau)]^2$\\
  & $=$ & $\ds {q}^{4}+6\,{q}^{6}-8\,{q}^{7}+ O(q^{8})$
\end{tabular}

\item[$\star$]
The element $E_{4,10}^{(5)}$:

\begin{tabular}{lcl}
$E_{4,10}^{(5)}(\tau)$ & = & $\Delta_2(5\tau) $\\
  & $=$ & $\ds \frac{1}{256} {\twpa}(\frac{1}{2},5\tau)^2$\\
  & $=$ & $\ds q^5+8q^{10}+28q^{15}+ O(q^{20})$
\end{tabular}

\item[$\star$]
The element $E_{4,10}^{(6)}$:

\begin{tabular}{lcl}
$E_{4,10}^{(6)}(\tau)$ & = & $\Delta_{10}(\tau)$\\
  & $=$ & $ \ds \eta(\tau)^2 \eta(2\tau)^{-4} \eta(5\tau)^{-10} \eta(10\tau)^{20}$\\
  & $=$ & $q^6-2q^7+3q^8-6q^9 + O(q^{10})$
\end{tabular}
\ee
\bs
\bs

\item[$\bullet$] The general $M_{2k}(\Gamma_0(10))$ case
\ms

We have the equality from Theorem II-\ref{ThmStruct101}:
\[\forall k \se 3, \ \ M_{2k}(\Gamma_0(10)) = Vect((E_{2k,10}^{(r)})_{0\ie r \ie 5}) \oplus \Delta_{10}.M_{2k-4}(\Gamma_0(10)).\]

For $k \se 3$, you can choose the first six elements of a $M_{2k}(\Gamma_0(10))$ unitary upper triangular basis as follows:

\be
\item[$\star$]
The element $E_{2k,10}^{(0)}$:

$E_{2k,10}^{(0)} = E_{4,10}^{(0)}][E_{2,10}^{(0)}]^{k-2} = [E_{2,10}^{(0)}]^k$

\item[$\star$]
The element $E_{2k,10}^{(1)}$:

$E_{2k,10}^{(1)} = [E_{4,10}^{(1)}][E_{2,10}^{(0)}]^{k-2}$

\item[$\star$]
The element $E_{2k,10}^{(2)}$:

$E_{2k,10}^{(1)} = [E_{4,10}^{(2)}][E_{2,10}^{(0)}]^{k-2}$

\item[$\star$]
The element $E_{2k,10}^{(3)}$:

$E_{2k,10}^{(1)} = [E_{4,10}^{(3)}][E_{2,10}^{(0)}]^{k-2}$

\item[$\star$]
The element $E_{2k,10}^{(4)}$:

$E_{2k,10}^{(1)} = [E_{4,10}^{(4)}][E_{2,10}^{(0)}]^{k-2}$

\item[$\star$]
The element $E_{2k,10}^{(5)}$:

$E_{2k,10}^{(5)} = [E_{4,10}^{(5)}][E_{2,10}^{(0)}]^{k-2}$
\ee
\ms

We then obtain a $M_{2k}(\Gamma_0(10))$ basis as follows.
\ms

We put ${\cal B}_2(\Gamma_0(10)) = (E_{2,10}{(r)})_{0\ie r \ie 2}$ and ${\cal B}_4(\Gamma_0(10)) = (E_{4,10}^{(r)})_{0\ie r \ie 6}$.
\ms

For $k\in \N^*$, by posing $k = 2q + r$, $r\in \{1,2\}$:

\[{\cal B}_{2k}(\Gamma_0(10)) =  \Delta_{10}^q {\cal B}_{2r}(\Gamma_0(10)) \ \cup \ \bigcup_{i=0}^5 \pa{E_{4,10}^{(i)}.[E_{2,10}^{(0)}]^a.\Delta_{10}^b, \ (a,b)\in\N^2 \ / \ a+2b = k-2}.\]
\ei
\bs
\ms

\section{-- Conclusion}
\ms

Essentially thanks to the introduction of the notion of strong modular unit, we were able to identify, in part I of this article \cite{FeauFM1}, the theoretical tools allowing one to structure the family of modular spaces $(M_{2k}(\Gamma_0(N)))_{k\in\N^*}$ and to obtain unitary upper triangular bases of them.
These results were applied to every values $1\ie N \ie 10$.
\ms

Although not all modularity proofs of the proposed functions are provided, this would produce a document as indigestible as it is voluminous, these have been done carefully for strong modular units in the first part of this article \cite{FeauFM1}. The modularity of other frequently appearing modular functions has also been verified, in particular thanks to the properties of the Weierstrass functions ${\wpa}$ and ${\twpa}$ studied in \cite{FeauE}.
\ms

These two elliptic functions were chosen ${\wpa}$ and ${\twpa}$ for several reasons.
\be
\item[(i)]
The ${\wpa}$ function is the elliptic Weierstrass function, while ${\twpa}$ is derived by renormalization. Both functions have an inversion symmetry that is almost immediately modular in character.
\item[(ii)]
Thanks to the lemmas \ref{Wp2Wpt} and \ref{Wpt2Wp}, one of the two functions ${\wpa}$ or ${\twpa}$ would have been enough to represent the bases of modular forms, but the formulations would have been heavier.
\item[(iii)]
For the modularity according to the congruence group $\Gamma_0(N)$, $1\ie N \ie 10$, and apart from the strong modular units represented in \cite{FeauFM1} in the form of $\eta$-products, these functions are sufficient to represent all the modular functions involved in this article.
\ee
\ms

This last point is important. The ${\wpa}$ and ${\twpa}$ functions naturally produce modular functions of weight $2$, then even weight per extension. If the congruence group enabled one access to odd degree modular spaces, for example $\Gamma_1(N)$, we would have chosen to represent the modular forms using Jacobi's elliptic functions, in particular $SD$, or even \cite{FeauE}. The functions ${\wpa}$ and ${\twpa}$ being themselves linked to Jacobi's functions, in particular $-\frac{1}{16}{\twpa}(z,\tau) = SD(z,\tau)^2$.
\ms

Other choices are possible to represent modular forms. In particular, for $k\in \N^*$, $\ds \frac{\partial^{2k} {\wpa}}{\partial^{2k} z}(z,\tau) = \frac{\partial^{2k} {\twpa}}{\partial^{2k} z}(z,\tau)$ naturally produces modular forms of weight $2k+2$.
\ms

We will have noted that the exceptional Eisenstein series $E_{2,2}^{(0)} M_{2}(\Gamma_0(2))$ appears here naturally since $\ds E_{2,2}^{(0)}(\tau) = -3{\wpa}(\tau,2\tau)$, and it is well known that, the Eisenstein series $E_{2k+2}$ is exactly $\ds \frac{\partial^{(2k)} {\wpa}}{\partial^{(2k)} z}(\tau,2\tau)$. The use of the derivation would have given greater clarity to the elements of the proposed bases, but the essential tools should have been expanded. Now the modular functions form a dense maquis in which it is better to have a good compass!
\ms

To close this second part of the article, I would like to acknowledge the exceptional work of the team in charge of the development of the SAGE module concerning modular functions. Many ideas have come from an interaction between theory and this powerful tool. And this, either to confirm an intuition or to guide towards the solution when the initial idea proved erroneous. Finally, this tool was a constant help to verify that a function was indeed in the expected modular space: a simple confrontation of the development in Fourier series with a unitary upper triangular basis of the modular space giving the answer in an almost certain way.
In return, this work provides high performance algorithms for the calculation of high weight modular forms.
\ms

For example, a modular form according to $\Gamma_0(10)$ with weight and valuation $2018$ is given by:
\[E_{2018,10}^{(2018)} = E_{4,10}^{(i)}[E_{2,10}^{(0)}]^a[\Delta_{10}]^b \ \text{avec} \ a+2b = 1007 \ \text{et} \ i+6b = 2018, \ 0\ie i \ie 5.\]
It comes $a = 335$, $b = 336$ and $i = 2$ for an almost immediate result on a personal computer:
\ms

$
\begin{array}{lcl}
E_{2018,10}^{(2018)} & = & {q}^{2018} - 672\,{q}^{2019}  +
226131\,{q}^{2020} - 50806116\,{q}^{2021} + 8574211132\,{q}^{2022} - \\
 &  & 1159385836896\,{q}^{2023} + 130843082948319\,{q}^{2024} -
12676560614152160\,{q}^{2025} +\\
 &  & + 1076314597159060977\,{q}^{2026} - 81359425707034726432\,{q}^{2027} + O(q^{2028})
\end{array}$

\bs
\bs
\bs
\bs

\end{document}